\documentclass{amsart}
\usepackage{amsmath}
\usepackage{amssymb}
\usepackage{amscd}
\usepackage[plainpages]{hyperref}

\newtheorem{theorem}{Theorem}
\newtheorem{lemma}[theorem]{Lemma}
\newtheorem{proposition}[theorem]{Proposition} 

\theoremstyle{definition}
\newtheorem{remark}[theorem]{Remark}

\newcommand{\A}{\ensuremath{\mathcal A}}
\newcommand{\CP}{\ensuremath{\mathbb{CP}}}
\newcommand{\C}{\ensuremath{\mathbb C}}
\newcommand{\Z}{\ensuremath{\mathbb Z}}
\newcommand{\bS}{\ensuremath{\mathbb S}}
\newcommand{\id}{\operatorname{id}}
\newcommand{\tv}{\operatorname{tv}}
\newcommand{\Mod}{\operatorname{Mod}}

\newcommand{\Aut}{\operatorname{Aut}}
\renewcommand{\bar}{\overline}

\begin{document}
\title[Automorphism groups of some pure braid groups]{Automorphism groups of some pure braid groups}
\author[Daniel C. Cohen]{Daniel C. Cohen$^\dag$}
\address{Department of Mathematics, Louisiana State University, Baton Rouge, LA 70803}
\email{\href{mailto:cohen@math.lsu.edu}{cohen@math.lsu.edu}}
\urladdr{\href{http://www.math.lsu.edu/~cohen/}
{www.math.lsu.edu/\char'176cohen}}
\thanks{{$^\dag$}Partially supported by Louisiana Board of Regents grant NSF(2010)-PFUND-171}
\subjclass[2010]
{
20F36, 
20E36
}
\keywords{pure braid group, automorphism group}

\begin{abstract}
We find finite presentations for the automorphism group of the Artin pure braid group and the automorphism group of the pure braid group associated to the full monomial group.
\end{abstract}

\maketitle

\section{Introduction}

Let $B_n$ be the Artin braid group, with generators $\sigma_1,\dots,\sigma_{n-1}$ and relations $\sigma_i\sigma_{i+1}\sigma_i=\sigma_{i+1}\sigma_i\sigma_{i+1}$ for $1\le i\le n-2$, and $\sigma_{j}\sigma_i=\sigma_{i}\sigma_j$ for $|j-i|\ge 2$.  
It is well known from work of Dyer and Grossman \cite{DG81} that the automorphism group of the braid group may be realized as $\Aut(B_n)\cong\bar{B}_n \rtimes \Z_2$, where $\bar{B}_n$ denotes $B_n$ modulo its center and $\Z_2$ acts by taking generators to their inverses. 
In this paper, we find an explicit presentation for the automorphism group of the Artin pure braid, the kernel $P_n=\ker(B_n \to \Sigma_n)$ of the natural map from the braid group to the symmetric group. The pure braid group has generators
\begin{equation} \label{eq:Pn gens}
A_{i,j}=\sigma_{j-1}^{}\cdots\sigma_{i+1}^{}\sigma_i^{2}\sigma_{i+1}^{-1}\cdots\sigma_{j-1}^{-1}
=\sigma_{i}^{-1}\cdots\sigma_{j-2}^{-1}\sigma_{j-1}^{2}\sigma_{j-2}^{}\cdots\sigma_{i}^{},
\end{equation}
and relations
\begin{equation} \label{eq:purebraidrels}
A_{r,s}^{-1}A_{i,j}^{}A_{r,s}^{}=
\begin{cases}
A_{i,j}^{}&\text{if $i<r<s<j$,}\\
A_{i,j}^{}&\text{if $r<s<i<j$,}\\
A_{r,j}^{}A_{i,j}^{}A_{r,j}^{-1}&\text{if $r<s=i<j$,}\\
A_{r,j}^{}A_{s,j}^{}A_{i,j}^{}A_{s,j}^{-1}A_{r,j}^{-1}&\text{if $r=i<s<j$,}\\
[A_{r,j}^{},A_{s,j}^{}]A_{i,j}^{}[A_{r,j}^{},A_{s,j}^{}]^{-1}&\text{if $r<i<s<j$,}
\end{cases}
\end{equation}
where $[u,v]=uvu^{-1}v^{-1}$ is the commutator.  Birman \cite{Bir75} is a general reference.

Let
\[
F(\C,n)=\{(x_1,\dots,x_n) \in \C^n \mid x_i \neq x_j\ \text{if}\ i \neq j\}
\]
be the configuration space of $n$ distinct ordered points in $\C$.  The symmetric group $\Sigma_n$ acts freely on $F(\C,n)$ by permuting coordinates.  Let $C(\C,n)=F(\C,n)/\Sigma_n$ denote the orbit space, the configuration space of $n$ distinct unordered points in $\C$.  It is well known that $P_n=\pi_1(F(\C,n))$, $B_n=\pi_1(C(\C,n))$, and that these spaces are Eilenberg-Mac\,Lane spaces for these braid groups.

For $i\neq j$, let $H_{i,j}=\ker(x_i-x_j)$, and let $\A_n=\{H_{i,j}\mid 1\le i<j\le n\}$ denote the braid arrangement in $\C^n$, consisting of the reflecting hyperplanes of the symmetric group $\Sigma_n$.
The configuration space $F(\C,n)=M(\A_n)=\C^n \smallsetminus \bigcup_{1\le i<j \le n}H_{i,j}$ may be realized as the complement of the braid arrangement $\A_n$. 
The other pure braid groups we consider may be viewed as arising from an analogous construction.

Let $r$ be a natural number greater than or equal to $2$. 
The complex hyperplane arrangement $\A_{r,n}$ in $\C^n$ defined by the polynomial
\begin{equation} \label{eq:monopoly}
Q_{r,n}=Q(\A_{r,n})=x_1\cdots x_n \prod_{1\le i<j \le n} (x_i^r - x_j^r)
\end{equation}
is known as a full monomial arrangement.  The complement $M(\A_{r,n}) = \C^n \setminus Q_{r,n}^{-1}(0)$ may be realized as the orbit configuration space\[
F_{\Gamma}(\C^*,n) = \{(x_1,\dots,x_n) \in (\C^*)^{n} \mid \Gamma\cdot x_i \cap \Gamma\cdot x_j
=\emptyset\ \text{if}\ i\neq j\}
\]
of ordered $n$-tuples of points in $\C^*$ which lie in distinct orbits of the free action of $\Gamma=\Z/r\Z$ on $\C^*$ by multiplication by the primitive $r$-th root of unity $\exp(2\pi\sqrt{-1}/r)$.

Let $B(r,n)$ denote the group with generators 
$\rho_0,\rho_1,\dots,\rho_{n-1}$ and relations 
\begin{equation} \label{eq:MonomialBraidRels}
(\rho_0\rho_1)^2=(\rho_1\rho_0)^2\!,\   
\rho_i\rho_{i+1}\rho_i=\rho_{i+1}\rho_i\rho_{i+1}\, (1\le 
i<n),\ \rho_i\rho_j=\rho_j\rho_i\  (|j-i|>1).
\end{equation}
This is the (full) monomial braid 
group, the fundamental group of the orbit space 
$M(\A_{r,n})/W$, where $W=G(r,n)$ is the full monomial group, 
cf.~\cite{BMR}.  Note that $B(r,n)$ is independent of $r$.  This group 
admits a natural surjection to $G(r,n)$, which may be presented with 
generators $\rho_0,\rho_1,\dots,\rho_{n-1}$ and relations 
\eqref{eq:MonomialBraidRels} together with 
$\rho_0^r=\rho_1^2=\dots =\rho_{n-1}^2=1$.
Note that the hyperplanes of 
$\A_{r,n}$ are the reflecting hyperplanes of the 
group $G(r,n)$, 
and that this group is isomorphic to the wreath product of the symmetric group $\Sigma_n$ and the cyclic group $\Z/r\Z$.

The fundamental group of the complement $M(\A_{r,n})$ of the full monomial arrangement is the kernel $P({r,n})=\pi_1(M(\A_{r,n}))$ of the aforementioned surjection 
$B(r,n) \twoheadrightarrow G(r,n)$, which we refer to as the pure monomial braid group.  Furthermore, $M(\A_{r,n})$ is an Eilenberg-Mac\,Lane space for the pure monomial braid group. 
A presentation for the group $P({r,n})$ is found in \cite[Thm.~2.2.4]{Coh01} (in slightly different notation).\footnote{\,There is a typographical error in the second family of relations recorded in \cite[(2.9)]{Coh01}.  These relations should read $[A_{j,k}^{(p)},\,A_{j,l}^{[p]}A_{i,l}^{(q)}(A_{j,l}^{[p]})^{-1}]$.}  
For $1\le i \le n$, let 
$X_i=\rho_{i-1}\cdots\rho_2\rho_1\rho_0\rho_1\rho_2 \cdots\rho_{i-1}$, 
and define
\begin{equation} \label{eq:PureMonomialGens}
\begin{split} 
C_j^{}&=\rho_{j-1}^{}\cdots\rho_2^{}\rho_1^{}\rho_0^r
\rho_1^{-1}\rho_2^{-1}\cdots\rho_{j-1}^{-1}
\ (1\le j\le n),\\
A_{i,j}^{(q)}&=X_i^{q-r} \cdot\rho_{j-1}^{}\cdots\rho_{i+1}^{} 
\rho_i^2
\rho_{i+1}^{-1}\cdots\rho_{j-1}^{-1}\cdot X_i^{r-q}
\ (1\le i<j\le n,\ 1\le q\le r).
\end{split}
\end{equation}
These elements generate the pure monomial braid group.

Setting $r=1$ in \eqref{eq:monopoly} yields a polynomial $Q_{1,n}$ which defines an arrangement $\A_{1,n}$ whose complement has the homotopy type of the complement of the braid arrangement $\A_{n+1}$ in $\C^{n+1}$, $M(\A_{1,n}) \simeq M(\A_{n+1})$.  For $r \ge 2$, the mapping $M(\A_{r,n}) \to M(\A_{1,n})$ defined by $(x_1,\dots,x_n) \mapsto (x_1^r,\dots,x_n^r)$ is a finite covering (equivalent to the pullback along the inclusion $M(\A_{1,n}) \hookrightarrow (\C^*)^n$ of the covering $(\C^*)^n \to (\C^*)^n$ defined by the same formula).  Thus, $P(r,n)=\pi_1(M(\A_{r,n}))$ is a finite index subgroup of $P_{n+1}=\pi_1(M(\A_{1,n}))$. 

In this paper, 
building on work of Bell and Margalit \cite{BM07} and Charney and Crisp \cite{CC05},  
we find finite presentations of the automorphism groups of the pure braid groups $P_n$ and $P(r,n)$.  These automorphism groups, $\Aut(P_n)$ in particular, are used in \cite{CFR11} to study the residual freeness of these pure braid groups. 
The structure of the automorphism groups of the full braid groups $B_n$ and $B(r,n)$ is known, 
see \cite{DG81} and \cite{CC05}. The monomial braid group $B(r,n)=B(2,n)$ may be realized as the Artin group of type B, and the automorphism group $\Aut(B(2,n))$ was determined in \cite{CC05} from this perspective.

\section{Preliminaries}

In this section, we gather a number of facts regarding split extensions, (pure) braid groups, and mapping class groups which will be of use in analyzing the automorphism groups of the pure braid groups $P_n$ and $P(r,n)$.

Let $K$ be a group with trivial center, $Z(K)=1$, and let $A$ be an abelian group.  As noted by Leininger and Margalit \cite{LM06}, a split central extension
\[
1\to A \to G \leftrightarrows K \to 1
\]
induces a split extension
\begin{equation} \label{eq:splitaut}
1\to \tv(G) \to \Aut(G) \leftrightarrows \Aut(K) \to 1,
\end{equation}
where $\tv(G) < \Aut(G)$ is the subgroup consisting of all automorphisms of $G$ which become trivial upon passing to the quotient $K$.  If, moreover, $G=A \times K$ is a direct product, an explicit splitting in \eqref{eq:splitaut} is given by sending $\alpha \in \Aut(K)$ to $\tilde\alpha \in \Aut(G)$, where $\left.\tilde\alpha\right|_A=\id_A$ and $\left.\tilde\alpha\right|_K=\alpha$.  We occassionally abuse notation and write simply $\alpha$ in place of $\tilde\alpha$ in this situation.

For a group $G$ with infinite cyclic center $Z=\langle z \rangle$, a transvection is an endomorphism of $G$ of the form $x \mapsto x z^{t(x)}$, where $t\colon G \to \Z$ is a homomorphism, see Charney and Crisp \cite{CC05}.  Such a map is an automorphism if an only if its restriction to $Z$ is surjective, which is the case if and only if $z \mapsto z$ or $z \mapsto z^{-1}$, that is, $t(z)=0$ or $t(z)=-2$.  For the groups we are interested in, the extension $1\to Z(G) \to G \to G/Z(G) \to 1$ is split (in fact $G \cong Z(G) \times G/Z(G)$), and $Z(G)$ is infinite cyclic.  In this instance, the subgroup $\tv(G) < \Aut(G)$ consists of all transvection automorphisms of $G$, so we refer to $\tv(G)$ as the transvection subgroup of $\Aut(G)$.

As alluded to in the previous paragraph, the pure braid groups $P_n$ and $P(r,n)$ admit direct product decompositions
\begin{equation} \label{eq:direct}
P_n \cong Z(P_n) \times P_n/Z(P_n)\quad \text{and} \quad P(r,n) \cong Z(P(r,n)) \times P(r,n)/Z(P(r,n)),
\end{equation}
and the center of each of these groups is infinite cyclic.  The above direct product decompositions (the first of which, for $P_n$, is well known) may be obtained using results from the theory of hyperplane arrangements.  See Orlik and Terao \cite{OT} as a general reference. First, if $\A$ is a central arrangement in $\C^n$ (the hyperplanes of which all contain the origin), the restriction of the Hopf bundle $\C^n\smallsetminus\{0\} \to \CP^{n-1}$ to the complement $M=M(\A)$ yields a homeomorphism $M \cong \C^* \times \bar{M}$, where $\bar{M}$ is the complement of the projectivization of $\A$ in $ \CP^{n-1}$. Thus, $\pi_1(M) \cong \Z \times \pi_1(\bar{M})$. Second, the braid arrangement $\A_n$ and the full monomial arrangement $\A_{r,n}$ are 
fiber-type (or supersolvable) arrangements.  As such, the fundamental groups of the complements decompose as iterated semidirect products of free groups,
\begin{equation*} \label{eq:iterated}
P_n=\pi_1(M(\A_n))= \rtimes_{k=1}^{n-1} F_k
\ \text{and}\ 
P(r,n)=\pi_1(M(\A_{r,n}))=\rtimes_{k=1}^{n-1} F_{r(k-1)+1},
\end{equation*}
where $F_k$ is the free group of rank $k$.  The direct product decompositions \eqref{eq:direct} follow easily from these two facts.  Note also that these considerations imply that the groups $\bar{P}_n=P_n/Z(P_n)$ and $\bar{P}(r,n)=P(r,n)/Z(P(r,n))$ are centerless.

Explicit generators for the centers of the braid groups $B_n$, $P_n$, $B(r,n)$, and $P(r,n)$ are known.  Regarding the Artin braid groups, it is a classical result of Chow (see \cite[Cor.~1.8.4]{Bir75}) that $Z(B_n)=Z(P_n)=\Z$, generated by
\[
Z_n = (\sigma_1\sigma_2\cdots\sigma_{n-1})^n=(A_{1,2})(A_{1,3}A_{2,3})\cdots (A_{1,n}\cdots A_{n-1,n}).
\]
The centers of the monomial braid groups $B(r,n)$ and $P(r,n)$ were determined by Brou\'e, Malle, and Rouquire \cite[Prop.~3.10]{BMR}.  In terms of the generators $\rho_0,\rho_1,\dots,\rho_{n-1}$ of $B(r,n)$, these centers are given by
\[
Z(B(r,n)) = \langle(\rho_0\rho_1\cdots \rho_{n-1})^n\rangle \ \text{and}\ 
Z(P(r,n)) = \langle(\rho_0\rho_1\cdots \rho_{n-1})^{nr}\rangle.
\]
Write $\zeta_n=(\rho_0\rho_1\cdots \rho_{n-1})^n$ so that $Z(B(r,n)) = \langle\zeta_n\rangle$ and 
$Z(P(r,n)) = \langle\zeta_n^r\rangle$.  
Since $B(r,n)=B(2,n)$ is the type B Artin group, the fact that $Z(B(r,n))=\langle \zeta_n\rangle$ follows from work of Deligne \cite{Del}, see also Brieskorn and Saito \cite{BS72}.

We express $\zeta_n^r$ in terms of the generators of the pure monomial braid group $P(r,n)$ recorded in \eqref{eq:PureMonomialGens}.  For $1\le i < j \le n$, let
\begin{align} \label{eq:particularmonobraids}
A_{i,j}^{[q]}&=A_{i,j}^{(q)}A_{i,j}^{(q+1)}\cdots A_{i,j}^{(r-1)} \quad \text{(for $q<r$),}\notag\\
V_{i,j}^{(q)}&=A_{i,j}^{(q)}A_{i+1,j}^{(q)}\cdots A_{j-1,j}^{(q)}\quad \text{(for $q\le r$),}\\
D_k&=A_{k-1,k}^{[1]}A_{k-2,k}^{[1]}\cdots A_{1,k}^{[1]} C_k V_{1,k}^{(r)}\quad \text{(for $k\le n$)}.\notag
\end{align}

\begin{lemma} \label{lem:monocenter}
The center of the pure monomial braid group $P(r,n)$ is generated~by
\[
\zeta_n^r = D_1 D_2 \cdots D_n.
\]
\end{lemma}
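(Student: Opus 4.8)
The plan is to deduce the identity from the already-known description of the center together with an induction on $n$ built on the fiber-type structure of $\A_{r,n}$. Since \cite{BMR} gives that $Z(P(r,n))$ is infinite cyclic and generated by $\zeta_n^r$, it suffices to prove the identity $\zeta_n^r=D_1D_2\cdots D_n$ in $P(r,n)$. First I would record the geometric meaning of this generator: the restricted Hopf bundle splitting $M(\A_{r,n})\cong\C^*\times\bar M$ identifies $Z(P(r,n))$ with the $\C^*$-factor, which is generated by the scaling loop $\gamma_n\colon t\mapsto e^{2\pi\sqrt{-1}\,t}(x_1,\dots,x_n)$; fixing orientations consistently, $\zeta_n^r=[\gamma_n]$. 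Throughout I fix a basepoint with $0<|x_1|<|x_2|<\dots<|x_n|$, which is convenient at every stage of the induction.

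The base case $n=1$ is immediate: $P(r,1)=\pi_1(\C^*)=\langle C_1\rangle$ and $\zeta_1^r=\rho_0^r=C_1=D_1$ (the remaining factors in $D_1$ are empty products). For the inductive step I use the locally trivial fibration $p\colon M(\A_{r,n})\to M(\A_{r,n-1})$ forgetting the last coordinate, whose fiber over $(x_1,\dots,x_{n-1})$ is $\Phi=\C\smallsetminus\bigl(\{0\}\cup\{\zeta^s x_i\mid 1\le i\le n-1,\ 0\le s<r\}\bigr)$, a plane with $r(n-1)+1$ punctures. This realizes $P(r,n)=P(r,n-1)\ltimes F_n$, where $F_n=\pi_1(\Phi)$ is free on $C_n$ and the $A_{i,n}^{(q)}$ ($1\le i\le n-1$, $1\le q\le r$), with section $\iota$ induced by adjoining a fixed far-away last point. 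Since $\iota$ is the standard inclusion $B(r,n-1)\hookrightarrow B(r,n)$ fixing $\rho_0,\dots,\rho_{n-2}$, it carries the elements \eqref{eq:PureMonomialGens} and \eqref{eq:particularmonobraids} with all indices $\le n-1$ to the corresponding elements of $P(r,n)$; in particular the inductive hypothesis gives $\iota(\zeta_{n-1}^r)=D_1\cdots D_{n-1}$.

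The heart of the argument is the factorization $\zeta_n^r=D_1\cdots D_{n-1}\cdot[\alpha_n]$, where $\alpha_n\colon t\mapsto(x_1,\dots,x_{n-1},e^{2\pi\sqrt{-1}\,t}x_n)$ rotates only the last point once about the origin. To obtain it I exhibit a based homotopy $\gamma_n\simeq\tilde\gamma_{n-1}*\alpha_n$, where $\tilde\gamma_{n-1}\colon t\mapsto(e^{2\pi\sqrt{-1}\,t}x_1,\dots,e^{2\pi\sqrt{-1}\,t}x_{n-1},x_n)$ rotates the first $n-1$ points; such a homotopy exists because along it the first $n-1$ coordinates are always a common rotation of $(x_1,\dots,x_{n-1})$ while the last stays on the circle of radius $|x_n|$, which is disjoint from $0$ and from the $\Gamma$-orbits of $x_1,\dots,x_{n-1}$ (of strictly smaller modulus), so one may freely interpolate the two angular speeds without leaving $M(\A_{r,n})$. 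Since $\tilde\gamma_{n-1}=\iota(\gamma_{n-1})$ and $[\gamma_{n-1}]=\zeta_{n-1}^r$, this gives $\zeta_n^r=\iota(\zeta_{n-1}^r)\cdot[\alpha_n]=D_1\cdots D_{n-1}\cdot[\alpha_n]$, so everything reduces to showing $[\alpha_n]=D_n$ in $F_n$.

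Finally, $\alpha_n$ traverses a large circle in $\Phi$ enclosing all $r(n-1)$ punctures $\zeta^s x_i$ together with $0$, so in the free group $F_n$ it equals the product of the meridians of these points taken in the cyclic order in which they are encountered. Comparing with the lasso descriptions of $C_n$ (a meridian of $\{x_n=0\}$) and of $A_{i,n}^{(q)}$ (a meridian of the hyperplane $\{x_n=\zeta^{?}x_i\}$ lying in the $q$-th angular sector) coming from \cite{Coh01}, and recalling $A_{i,n}^{[1]}=A_{i,n}^{(1)}\cdots A_{i,n}^{(r-1)}$ and $V_{1,n}^{(r)}=A_{1,n}^{(r)}\cdots A_{n-1,n}^{(r)}$, one reads off that this ordered product is precisely $D_n=A_{n-1,n}^{[1]}\cdots A_{1,n}^{[1]}\,C_n\,V_{1,n}^{(r)}$; this closes the induction. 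I expect the main obstacle to be exactly this last step: determining the cyclic order of the hyperplanes $\{x_n=0\}$ and $\{x_n=\zeta^s x_i\}$ as seen from a large value of $x_n$ and matching it with the precise word $D_n$ — in particular the placement of $C_n$ between the $q<r$ and $q=r$ factors — which is a routine but delicate bookkeeping that must be carried out in accordance with the conventions of \cite{Coh01}. (One can instead bypass the geometry and verify the equivalent recursion $\zeta_n^r=\zeta_{n-1}^r D_n$ by direct manipulation in $B(r,n)$ from the relations \eqref{eq:MonomialBraidRels}, in the spirit of the classical identity $(\sigma_1\cdots\sigma_{n-1})^n=(A_{1,2})(A_{1,3}A_{2,3})\cdots(A_{1,n}\cdots A_{n-1,n})$ for the Artin full twist.)
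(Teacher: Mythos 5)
Your overall strategy---induction on $n$ via the fiber-type structure, splitting the full scaling loop as (scaling of the first $n-1$ points) followed by (the last point encircling everything), and then evaluating the latter in the free fiber group---is viable, and it runs parallel to the paper's algebraic skeleton: the paper shows $\zeta_n=X_1\cdots X_n$ with the $X_i$ pairwise commuting, hence $\zeta_n^r=\zeta_{n-1}^r\cdot X_n^r$, which is exactly your factorization $\zeta_n^r=\iota(\zeta_{n-1}^r)\cdot[\alpha_n]$ (your loop $\alpha_n$ represents $X_n^r$). Up to that point your argument is fine, granting the identification of the scaling loop with $\zeta_n^r$ rather than its inverse (essentially contained in \cite{BMR}, but it does require fixing orientation conventions, since centrality alone only determines the class up to sign).

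The decisive step, however, is missing. The entire content of the lemma is the identity $[\alpha_n]=D_n$ as a word in the specific generators \eqref{eq:PureMonomialGens}: that $C_n$ and the $A_{i,n}^{(q)}$ form a geometric basis of the fiber for which the large circle is \emph{literally} the product $A_{n-1,n}^{[1]}\cdots A_{1,n}^{[1]}C_n^{}V_{1,n}^{(r)}$, with no correcting conjugations and with exactly this interleaving (the $q<r$ meridians of each orbit, in descending $i$, before $C_n$; the $q=r$ meridians, in ascending $i$, after). This cannot simply be ``read off'' from the cyclic order of the punctures: the generators are defined as braid words conjugated by powers of $X_i$, i.e.\ by motions of $x_i$ rather than of $x_n$, so determining their lasso representatives in the fiber (which puncture, and with which tail from the far-away basepoint) is precisely the delicate bookkeeping you defer---and that bookkeeping \emph{is} the lemma; \cite{Coh01} does not hand you ready-made lasso descriptions to compare against. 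The paper carries out this verification algebraically: it checks $C_nV_{1,n}^{(r)}=\rho_{n-1}\cdots\rho_1\rho_0^r\rho_1\cdots\rho_{n-1}$ and $A_{i,n}^{[1]}=X_i^{1-r}(A_{i,n}^{(r)}X_i)^{r-1}$, and then manipulates (via $Y_i=\rho_i^2X_i$, $Y_i\rho_i=\rho_iX_{i+1}$) to obtain $D_n=X_n^r$. Your parenthetical suggestion to ``bypass the geometry and verify $\zeta_n^r=\zeta_{n-1}^rD_n$ directly in $B(r,n)$'' is in fact the paper's proof; as written, your proposal stops exactly where that computation has to begin, so it does not yet establish the stated identity.
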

\begin{proof}
Recall the braids $X_i=\rho_{i-1}\cdots\rho_2\rho_1\rho_0\rho_1\rho_2 \cdots\rho_{i-1}$ in $B(r,n)$.  An inductive argument using the monomial braid relations \eqref{eq:MonomialBraidRels} reveals that
\begin{equation*} \label{ZBrnX}
\zeta_n = X_1 X_2 \cdots X_n = \zeta_{n-1} \cdot X_n.
\end{equation*}
The relations \eqref{eq:MonomialBraidRels} may also be used to check that $X_iX_j=X_jX_i$ for each $i$ and $j$.  
Thus, $Z(P(r,n))$ is generated by $\zeta_n^r = (X_1 X_2 \cdots X_n)^r=X_1^r X_2^r \cdots X_n^r=\zeta_{n-1}^r \cdot X_n^r$.
We may inductively assume that $\zeta_{n-1}^r=D_1 D_2 \cdots D_{n-1}$, so it suffices to show that $D_n=X_n^r$.

Use \eqref{eq:PureMonomialGens} and \eqref{eq:particularmonobraids} to check that $C_n V_{1,n}^{(r)}=\rho_{n-1} \cdots \rho_1 \rho_0^r \rho_1 \cdots \rho_{n-1}$ and also that
$A_{i,n}^{[1]} = X_i^{1-r}(A_{i,n}^{(r)} X_i)^{r-1}$.  Then, a calculation reveals that
\[
D_n=(X_1 \cdots X_{n-1})^{1-r} Y_{n-1}^{r-1} \rho_{n-1} \cdots Y_1^{r-1} \rho_1 \rho_0^r \rho_1 \cdots \rho_{n-1},
\]
where $Y_i = \rho_i^2 X_i$.  Since $Y_i \rho_i=\rho_iX_{i+1}$ and $X_j \rho_i=\rho_i X_j$ for $i<j$, we have
\[
\begin{aligned}
D_n&=(X_1 \cdots X_{n-1})^{1-r} \rho_{n-1} \cdots \rho_1 X_n^{r-1} \cdots X_2^{r-1} \rho_0^r \rho_1 \cdots \rho_{n-1}\\
\ \qquad\qquad&=\zeta_{n-1}^{1-r} \rho_{n-1} \cdots \rho_1 \zeta_n^{r-1} \rho_0\rho_1\cdots \rho_{n-1}=\zeta_{n-1}^{1-r} \zeta_n^{r-1} X_n  = X_n^r. \qquad\qquad\ \qedhere
\end{aligned}
\]
\end{proof}

Recall that $\bar{P}_n=P_n/Z(P_n)$ and $\bar{P}(r,n)=P(r,n)/Z(P(r,n))$.  These groups may be realized as finite index subgroups of the (extended) mapping class group of the punctured sphere.  Let $\bS_m$ denote the sphere $S^2$ with $m$ punctures, and let $\Mod(\bS_{m})$ be the extended mapping class group of $\bS_{m}$, the group of isotopy classes of all self-diffeomorphisms of $\bS_{m}$.  

The mapping class group $M(0,m)$ of isotopy classes of orientation-preserving self-diffeomorphisms of $\bS_{m}$ is an index two subgroup of $\Mod(\bS_{m})$.  For $m\ge 2$, the group $M(0,m)$ admits a presentation with generators $\omega_1,\dots,\omega_{m-1}$ and relations
\begin{equation}
\begin{array}{ll} \label{eq:mcgrels}
\omega_i\omega_j=\omega_j\omega_i\ \text{for}\ |i-j|\ge 2, 
&\omega_i \omega_{i+1} \omega_i= \omega_{i+1} \omega_i \omega_{i+1}, \\
\omega_1\cdots \omega_{m-2}\omega_{m-1}^2\omega_{m-2}\cdots \omega_1=1,
&(\omega_1 \omega_2 \cdots \omega_{m-1})^{m}=1,
\end{array}
\end{equation}
see \cite[Thm.~4.5]{Bir75}. 
The extended mapping class group $\Mod(\bS_{m})$ then admits a presentation with the above generators and relations, along with the additional generator $\epsilon$ and relations $(\epsilon \omega_i)^2=1$ and $\epsilon^2=1$.

If $G$ is a subgroup of a group $\Gamma$, recall that the normalizer of $G$ in $\Gamma$ is
$N_\Gamma(G) =\{\gamma \in \Gamma \mid \gamma^{-1} G \gamma = G\}$, 
the largest subgroup of $\Gamma$ having $G$ as a normal subgroup.  Building on work of Korkmaz \cite{Kor99} and Ivanov \cite{Iv03}, Charney and Crisp \cite[Cor.~4 (ii)]{CC05} establish the following.

\begin{proposition} \label{prop:CC}
If $m \ge 5$ and $G$ is a finite index subgroup of $\Gamma=\Mod(\bS_m)$, then $\Aut(G) \cong N_\Gamma(G)$.
\end{proposition}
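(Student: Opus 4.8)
The plan is to realize $\Aut(G)$ inside $\Gamma=\Mod(\bS_m)$ through the conjugation action, the essential input being the rigidity theory of mapping class groups and curve complexes of Ivanov and Korkmaz. First I would define a homomorphism $\Psi\colon N_{\Gamma}(G)\to\Aut(G)$ by $\Psi(g)=c_g$, where $c_g(x)=gxg^{-1}$; this is well defined precisely because membership in $N_{\Gamma}(G)$ is the statement $gGg^{-1}=G$, and $\Psi$ is visibly a homomorphism. The proposition then reduces to showing that $\Psi$ is bijective.

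Injectivity amounts to the triviality of $\ker\Psi=C_{\Gamma}(G)$, the centralizer of $G$ in $\Gamma$. That a finite-index subgroup of a mapping class group has trivial centralizer in the group is a standard fact --- it is part of the Ivanov--Korkmaz package, and can also be extracted from the description of centralizers of pseudo-Anosov mapping classes (an element centralizing $G$ would lie in the virtually cyclic centralizers of two independent pseudo-Anosov elements of $G$, whose intersection is trivial) --- and I would simply quote it.

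For surjectivity, let $\phi\in\Aut(G)$. The key step is the rigidity theorem of \cite{Iv03}, with the punctured-sphere cases supplied by \cite{Kor99}: for $m\ge 5$, any isomorphism between finite-index subgroups of $\Gamma=\Mod(\bS_m)$ is the restriction of conjugation by an element of $\Gamma$. Applying this to $\phi$, regarded as an isomorphism between the finite-index subgroups $G$ and $\phi(G)=G$, produces $g\in\Gamma$ with $\phi=c_g|_G$. Comparing images in $\Gamma$ gives $gGg^{-1}=\phi(G)=G$, so $g\in N_{\Gamma}(G)$ and $\Psi(g)=\phi$. Hence $\Psi$ is surjective, and $\Aut(G)\cong N_{\Gamma}(G)$.

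The real content --- and the reason the hypothesis $m\ge 5$ is present --- is the rigidity theorem itself, which I would invoke rather than reprove. Its proof has two technical cores. First, one shows that an abstract automorphism of the \emph{subgroup} $G$, not of $\Gamma$, respects the algebraic signature of Dehn twists (roots and powers, centralizers, the commutation pattern of twists about disjoint curves), so that $\phi$ induces a simplicial automorphism of the curve complex $\mathcal{C}(\bS_m)$. Second, one needs $\Aut(\mathcal{C}(\bS_m))\cong\Mod(\bS_m)$, which holds exactly for $m\ge 5$: the $4$-punctured sphere is genuinely exceptional, its curve complex being $0$-dimensional and carrying far more automorphisms than the mapping class group. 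These two ingredients are precisely what \cite{Kor99} and \cite{Iv03} provide.
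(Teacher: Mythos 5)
Your argument is correct and is precisely the standard one underlying this statement: the paper gives no proof of Proposition~\ref{prop:CC}, simply quoting Charney--Crisp \cite[Cor.~4 (ii)]{CC05}, which rests on exactly the two inputs you identify --- Ivanov--Korkmaz rigidity (every isomorphism between finite-index subgroups of $\Mod(\bS_m)$, $m\ge 5$, is induced by conjugation in $\Mod(\bS_m)$, via $\Aut$ of the curve complex) together with triviality of the centralizer $C_\Gamma(G)$, giving bijectivity of $g\mapsto c_g$ from $N_\Gamma(G)$ to $\Aut(G)$. One small caveat in your parenthetical justification of injectivity: the intersection of the centralizers of two independent pseudo-Anosov elements is a priori only a finite subgroup, not obviously trivial, so the quoted centralizer fact also uses the standard observation that a finite-order mapping class commuting with a finite-index subgroup must be trivial; since you invoke the fact rather than prove it, this does not affect the validity of your proof.
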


Throughout the paper, $\Aut(G)$ denotes the group of right automorphisms of $G$, with multiplication $\alpha\cdot \beta=\beta\circ\alpha$.

\section{Automorphisms of the Artin pure braid group}

The map $B_n \to M(0,n+1) \to \Mod(\bS_{n+1})$ given by $\sigma_i \mapsto \omega_i$, $1\le i \le n-1$, realizes $\bar{B}_n=B_n/Z$ as a finite index subgroup of the extended mapping class group $\Mod(\bS_{n+1})$, where $Z=Z(B_n)=Z(P_n)$, see, for instance, \cite{CC05}.  This comes from realizing $B_n$ as the orientation-preserving mappping class group of ${\mathbb D}_n$, the $n$-punctured disk, relative to the boundary, and including ${\mathbb D}_n$ in $\bS_{n+1}$.  In this way, $\bar{P}_n=P_n/Z$ is realized as $\mathrm{P}\!\Mod(\bS_{n+1})$, the subgroup of orientation-preserving mapping classes which fix every puncture.

The subgroup $\bar{P}_n$ is normal in $\Mod(\bS_{n+1})$.  Thus, $\Aut(\bar{P}_n) \cong \Mod(\bS_{n+1})$ for $n\ge 4$, see Proposition \ref{prop:CC}.  This fact was originally established by Korkmaz \cite{Kor99}, and extended by Bell and Margalit \cite{BM07}.  Since $P_n \cong Z \times \bar{P}_n$, the split extension \eqref{eq:splitaut} yields a semidirect product decomposition
$\Aut(P_n) \cong \tv(P_n) \rtimes \Aut(\bar{P}_n)$.  This is an ingredient in the identification, for $n\ge 4$, of the automorphism group of the pure braid group as 
\begin{equation} \label{eq:semi}
\Aut(P_n) \cong (\Z^N \rtimes \Z_2) \rtimes \Mod(\bS_{n+1})
\end{equation}
made by Bell and Margalit \cite[Thm.~8]{BM07}.  
Here, $\tv(P_n)\cong \Z^N \rtimes \Z_2$, where $N=\binom{n}{2}-1$.

Recall that the center $Z=Z(P_n)$ of the pure braid group is infinite cyclic, generated by 
$Z_n=A_{1,2}A_{1,3}A_{2,3}\cdots A_{1,n}\cdots A_{n-1,n}$.
The transvection subgroup $\tv(P_n)$ of $\Aut(P_n)$ consists of automorphisms of the form $A_{i,j} \mapsto A_{i,j} Z_n^{t_{i,j}}$, where $t_{i,j} \in \Z$ and $\sum t_{i,j}$ is either equal to $0$ or $-2$.  In the former case, $Z_n \mapsto Z_n$, while $Z_n \mapsto Z_n^{-1}$ in the latter.  This yields a surjection $\tv(P_n) \to \Z_2$, with kernel consisting of transvections for which $\sum t_{i,j}=0$.  Since $P_n$ has $\binom{n}{2}=N+1$ generators, this kernel is free abelian of rank $N$.  The choice $t_{1,2}=-2$ and all other $t_{i,j}=0$ gives a splitting $\Z_2 \to \tv(P_n)$.  Thus, 
$\tv(P_n) \cong \Z^N \rtimes \Z_2$.  This group is generated by transvections $\psi,\phi_{i,j}\colon P_n \to P_n$, $1\le i<j\le n$, $\{i,j\}\neq \{1,2\}$, where
\begin{equation} \label{eq:transvections}
\psi\colon A_{p,q} \mapsto \begin{cases} A_{1,2}Z_n^{-2}&\text{$p=1$, $q=2$,}\\ 
A_{p,q}&\text{otherwise,} \end{cases} \ 
\phi_{i,j}\colon A_{p,q} \mapsto \begin{cases} A_{1,2}Z_n^{}&\text{$p=1$, $q=2$,}\\ 
A_{i,j}Z_n^{-1}&\text{$p=i$, $q=j$,}\\ A_{p,q}&\text{otherwise.} \end{cases}
\end{equation}
It is readily checked that $\psi^2=1$ and that $\psi \phi_{i,j}\psi = \phi_{i,j}^{-1}$.  Observe that nontrivial elements of $\tv(P_n)$ are outer automorphisms.

The mapping class group $\Mod(\bS_{n+1})$ acts on $\bar{P}_n=P_n/\langle Z_n\rangle \cong \mathrm{P}\!\Mod(\bS_{n+1})$ by conjugation.  
We exhibit automorphisms of $P_n$ which fix the generator $Z_n$ of the center and induce the corresponding automorphisms of $\bar{P}_n$ upon passing to the quotient.  
For group elements $x$ and $y$, write $y^x=x^{-1}yx$.  

Define elements $\omega_k$, $1\le k \le n$, and $\epsilon$ of $\Aut(P_n)$ as follows:
\begin{equation} \label{eq:mcgautos}
\begin{aligned}
\omega_k\colon A_{i,j}&\mapsto \begin{cases}
A_{i-1,j}&\text{if $k=i-1$,}\\
A_{i+1,j}^{A_{i,i+1}}&\text{if $k=i<j-1$,}\\
A_{i,j-1}&\text{if $k=j-1>i$,}\\
A_{i,j+1}^{A_{j,j+1}}&\text{if $k=j$,}\\
A_{i,j}&\text{otherwise,}
\end{cases}
\qquad \text{for $1\le k \le n-1$, $k \neq 2$,}\\
\omega_2\colon A_{i,j}&\mapsto \begin{cases}
A_{1,3}^{A_{2,3}}Z_n&\text{if $i=1$, $j=2$,}\\
A_{1,2}Z_n^{-1}&\text{if $i=1$, $j=3$,}\\
A_{3,j}^{A_{2,3}}&\text{if $i=2$, $j\ge 4$,}\\
A_{2,j}&\text{if $i=3$,}\\
A_{i,j}&\text{otherwise,}
\end{cases}\\
\omega_n\colon A_{i,j}&\mapsto \begin{cases}
A_{i,j}&\text{if $j \neq n$,}\\
(A_{1,n}A_{1,2}A_{1,3}\cdots A_{1,n-1})^{-1}Z_n&\text{if $i=1$, $j=n$,}\\
(A_{2,n}A_{1,2}A_{2,3}\cdots A_{2,n-1})^{-1}Z_n&\text{if $i=2$, $j=n$,}\\
(A_{i,n}A_{1,i}\cdots A_{i-1,i} A_{i,i+1}\cdots A_{i,n-1})^{-1}&\text{if $3\le i$, $j=n$,}
\end{cases}\\
\epsilon\colon A_{i,j}&\mapsto 
\begin{cases} 
A_{1,2}^{-1} Z_n^2&\text{if $i=1$, $j=2$,}\\
(A_{i+1,j} \cdots A_{j-1,j})^{-1} A_{i,j}^{-1} (A_{i+1,j} \cdots A_{j-1,j})&\text{otherwise.}
\end{cases}
\end{aligned}
\end{equation}
Check that $\omega_k(Z_n)=Z_n$ for each $k$, and $\epsilon(Z_n)=Z_n$.  Also, note that, for $1\le k \le n-1$ and $k\neq 2$, the automorphism $\omega_k$ is given by the usual conjugation action of the braid $\sigma_k$ on the pure braid group, $\omega_k(A_{i,j}^{})=A_{i,j}^{\sigma_k}=\sigma_k^{-1}A_{i,j}^{}\sigma_k^{}$, see \cite{DG81}.  The automorphism $\omega_2$ is the composite of the conjugation action of $\sigma_2$ and the transvection $\phi_{1,3}$, see \eqref{eq:transvections}.  This accounts for the fact that $A_{1,2}=[(A_{1,3}A_{2,3})\cdots (A_{1,n}\cdots A_{n-1,n})]^{-1}$ in $\bar{P}_n$, the fact that, for instance, $A_{1,3}^{\sigma_2}=A_{1,2}$ in $P_n$, and insures that $\omega_2(Z_n)=Z_n$.

Similar considerations explain the occurrence of $Z_n$ in the formulas for the automorphisms $\omega_n$ and $\epsilon$ above.  The former automorphism of $P_n$ lifts the automorphism of $\bar{P}_n$ given by conjugation by $\omega_n \in \Mod(\bS_{n+1})$.  This conjugation action can be determined using the mapping class group relations \eqref{eq:mcgrels}, noting that the relations $\omega_i\omega_{i+1}\omega_i=\omega_{i+1}\omega_i\omega_{i+1}$ and  
$\omega_1\cdots \omega_{n-1}\omega_n^2\omega_{n-1}\cdots \omega_1=1$ imply that, for instance,
\[
\begin{aligned}
\omega_n^{-1} A_{n-1,n}^{} \omega_n^{}&=
\omega_n^{-1} \omega_{n-1}^{2} \omega_n^{}=
\omega_{n-1}^{} \omega_{n}^{2} \omega_{n-1}^{-1}\\
&=\omega_{n-1}^{}\left(\omega_{n-1}^{}\cdots \omega_2^{} \omega_1^2 \omega_2^{} \cdots \omega_{n-1}^{}\right)^{-1} \omega_{n-1}^{-1}\\
&=\omega_{n-1}^{}\left(A_{1,n}^{} A_{2,n}^{}  \cdots A_{n-1,n}^{}\right)^{-1} \omega_{n-1}^{-1}\\
&=(A_{n-1,n}A_{1,n-1}\cdots A_{n-2,n-1})^{-1}.
\end{aligned}
\]
Similarly, the fact that $A_{i,n}=A_{n-1,n}^{\omega_{n-2}\cdots \omega_i}$ for $i \le n-2$ may be used to calculate $\omega_n^{-1} A_{i,n}^{} \omega_n^{}$.

\begin{proposition} \label{prop:mcgrels}
The elements $\omega_1,\dots,\omega_n,\epsilon \in \Aut(P_n)$ 
satisfy the mapping class group relations \eqref{eq:mcgrels} and the relations $\epsilon^2=1$ and $(\epsilon \omega_k)^2=1$ for each $k$, $1\le k \le n$.
\end{proposition}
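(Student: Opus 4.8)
The plan is to deduce each relation from the corresponding identity in $\Mod(\bS_{n+1})$ and then to show that the resulting discrepancy, which is a priori a transvection, is in fact trivial. Apply \eqref{eq:splitaut} to the split central extension $1 \to Z \to P_n \to \bar{P}_n \to 1$ coming from \eqref{eq:direct}: reduction modulo the center is a homomorphism $\Aut(P_n) \to \Aut(\bar{P}_n)$ with kernel $\tv(P_n)$, and under the identification $\Aut(\bar{P}_n) \cong \Mod(\bS_{n+1})$ of Proposition \ref{prop:CC} (valid for $n\ge 4$) it sends a mapping class $g$ to conjugation by $g$. By construction --- see the discussion preceding this proposition --- the automorphisms $\omega_1,\dots,\omega_n$ and $\epsilon$ of $P_n$ reduce, respectively, to the standard generators $\omega_1,\dots,\omega_n$ and $\epsilon$ of $\Mod(\bS_{n+1})$. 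These generators satisfy \eqref{eq:mcgrels} together with $\epsilon^2=1$ and $(\epsilon\omega_k)^2=1$, since those are precisely the defining relations of $\Mod(\bS_{n+1})$ with $m=n+1$. Hence, writing $W$ for the left-hand side of any one of the relations to be verified, the image of $W$ in $\Mod(\bS_{n+1})$ is trivial, so $W \in \tv(P_n)$.

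Next I would invoke the fact, noted just after \eqref{eq:mcgautos}, that each of $\omega_1,\dots,\omega_n,\epsilon$ fixes the central generator $Z_n$; therefore $W$ fixes $Z_n$. A transvection $A_{i,j} \mapsto A_{i,j}Z_n^{t_{i,j}}$ sends $Z_n$ to $Z_n^{1+\sum t_{i,j}}$, so it fixes $Z_n$ exactly when $\sum_{i<j}t_{i,j}=0$; that is, $W$ lies in the free abelian subgroup $\Z^N < \tv(P_n)$ spanned by the $\phi_{i,j}$ of \eqref{eq:transvections}. It thus remains only to show that all the integers $t_{i,j}$ attached to $W$ vanish.

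To read off the $t_{i,j}$ I would abelianize. The pure braid relations \eqref{eq:purebraidrels} all become trivial in $H_1(P_n)$, so $H_1(P_n) \cong \Z^{\binom{n}{2}}$ is free on the classes $a_{i,j}$ of the $A_{i,j}$, and $Z_n$ has class $z = \sum_{i<j}a_{i,j}$. The transvection $W$ acts on $H_1(P_n)$ by $a_{i,j} \mapsto a_{i,j} + t_{i,j}z$, which is the identity if and only if every $t_{i,j}=0$ (as $z \neq 0$). So the proposition reduces to checking that each relation word $W$ acts as the identity on $H_1(P_n)$. The induced action of each generator is read directly off \eqref{eq:mcgautos}: for $k \neq 2,n$ the automorphism $\omega_k$ induces the permutation of the basis $\{a_{i,j}\}$ coming from the transposition $(k,\,k{+}1)$ of indices, while $\omega_2$, $\omega_n$, and $\epsilon$ induce that same kind of (signed) permutation together with explicit correction terms that are integer multiples of $z$ (for instance $\epsilon$ induces $a_{1,2} \mapsto -a_{1,2}+2z$ and $a_{i,j}\mapsto -a_{i,j}$ for $\{i,j\}\neq\{1,2\}$). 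One then verifies the required identities in $\mathrm{GL}(\binom{n}{2},\Z)$, which can be organized uniformly in $n$ or by induction.

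I expect this last step to be the main obstacle: checking the matrix identities for the two \emph{global} relations $\omega_1\cdots\omega_{n-1}\omega_n^2\omega_{n-1}\cdots\omega_1 = 1$ and $(\omega_1\cdots\omega_n)^{n+1}=1$, where the $z$-correction terms contributed by $\omega_2$, $\omega_n$, and $\epsilon$ must be seen to cancel. The braid and far-commutation relations, as well as $\epsilon^2=1$ and $(\epsilon\omega_k)^2=1$, should be comparatively routine: whenever the indices avoid $2$ and $n$ the $\omega_k$ are honest conjugations by the braids $\sigma_k$, so the relevant identities hold in $\Aut(P_n)$ on the nose and otherwise reduce to relations among transpositions, leaving only a few short direct computations with \eqref{eq:purebraidrels} and \eqref{eq:mcgautos}. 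Finally, since Proposition \ref{prop:CC} requires $n+1\ge 5$, one should either restrict to $n\ge 4$ (the range in which the presentation of $\Aut(P_n)$ is sought) or else verify the relations directly, for all $n$ and bypassing the reduction modulo the center, by a longer but elementary computation with the pure braid relations.
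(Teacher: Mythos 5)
Your argument is correct, but it is a genuinely different route from the paper's. The paper proves the proposition by brute force inside $\Aut(P_n)$: for $k\neq 2,n$ the $\omega_k$ are honest conjugations by the $\sigma_k$, and the remaining relations --- above all $\omega_1\cdots\omega_{n-1}\omega_n^2\omega_{n-1}\cdots\omega_1=1$ and $(\omega_1\cdots\omega_n)^{n+1}=1$ --- are verified by explicitly computing the composite automorphisms on the generators $A_{i,j}$ and simplifying with the pure braid relations \eqref{eq:purebraidrels}. You instead observe that each relation word reduces to the identity in $\Aut(\bar{P}_n)$, hence lies in $\tv(P_n)$, and that a transvection is completely detected by its action on $H_1(P_n)\cong\Z^{\binom{n}{2}}$; this converts the whole verification into identities in $\mathrm{GL}(\binom{n}{2},\Z)$ and eliminates all the non-abelian conjugation bookkeeping, which is a real simplification for the two global relations. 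Two caveats about what your route costs. First, it leans on the statement that $\omega_n$ and $\epsilon$ as defined in \eqref{eq:mcgautos} really do descend to conjugation by the corresponding elements of $\Mod(\bS_{n+1})$; the paper only sketches this (one sample computation for $\omega_n$, ``similar considerations'' for $\epsilon$), and the paper's own proof of the proposition never uses it, so in your write-up that generator-by-generator check must be carried out in full --- it is comparable in kind, though lighter, than the paper's direct computations. Second, your closing worry about $n\ge 4$ is unnecessary: you never need the isomorphism $\Aut(\bar{P}_n)\cong\Mod(\bS_{n+1})$ of Proposition \ref{prop:CC}, only the conjugation homomorphism $\Mod(\bS_{n+1})\to\Aut(\bar{P}_n)$ coming from the normality of $\bar{P}_n$, which exists for every $n$; so the reduction works without restriction. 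A small inaccuracy in your description of the matrices: $\omega_n$ does not induce a signed permutation plus multiples of $z$ --- it sends $a_{i,n}$ to $-\sum_{k\neq i}a_{\{i,k\}}$ (plus $z$ when $i\in\{1,2\}$) --- but since you propose to read the matrices directly off \eqref{eq:mcgautos}, this does not affect the method, only the bookkeeping in the final, still somewhat lengthy, linear-algebra check.
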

\begin{proof}
As noted above, for $1\le k \le n-1$ and $k\neq 2$, the automorphism $\omega_k$ is given by the conjugation action of the braid $\sigma_k$, 
$\omega_k(A_{i,j}) = A_{i,j}^{\sigma_k}$.  It follows that all of the (braid) relations \eqref{eq:mcgrels} that do not involve $\omega_2$ or $\omega_n$ hold.  
So it remains to check that the automorphisms $\omega_k$ of $P_n$ satisfy
$\omega_i\omega_2\omega_i=\omega_2\omega_i\omega_2$ for $i=1,3$, $\omega_2\omega_i=\omega_i\omega_2$ for $i \ge 4$, 
$\omega_{n-1}\omega_n\omega_{n-1}=\omega_{n}\omega_{n-1}\omega_{n}$, $\omega_i\omega_n=\omega_n\omega_i$ for $i\le n-2$, 
$\omega_1\cdots \omega_{n-1}\omega_n^2\omega_{n-1}\cdots \omega_1=1$, and 
$(\omega_1 \omega_2 \cdots \omega_n)^{n+1}=1$.  We will check the last two, and leave the others as exercises for the reader.

To verify that $\omega_1\cdots \omega_{n-1}\omega_n^2\omega_{n-1}\cdots \omega_1=1$, first check that
\begin{equation*} \label{eq:composites}
\begin{aligned}
\omega_1\cdots \omega_{n-1}(A_{i,j})&=\begin{cases}
Z_n A_{1,n}^{A_{2,n}\cdots A_{n-1,n}}&\text{if $i=1$, $j=2$,}\\
Z_n^{-1}A_{1,2}&\text{if $i=2$, $j=3$,}\\
A_{j-1,n}^{A_{j,n}\cdots A_{n-1,n}(A_{1,j-1}\cdots A_{j-2,j-1})^{-1}}&\text{if $i=1$, $j\ge 3$,}\\
A_{i-1,j-1}&\text{otherwise,}
\end{cases}\\
\omega_n^2(A_{i,j})&=\begin{cases}
A_{i,j}&\text{if $j \le n-1$,}\\
A_{i,n}^{A_{1,i}\cdots A_{i-1,i}A_{i,i+1}\cdots A_{i,n-1}}\hskip 31pt &\text{if $j=n$,}
\end{cases}\\
\omega_{n-1}\cdots \omega_{1}(A_{i,j})&=\begin{cases}
A_{2,3} Z_n \hskip 108pt &\text{if $i=1$, $j=2$,}\\
A_{1,2} Z_n^{-1}&\text{if $i=1$, $j=n$,}\\
A_{1,i+1}&\text{if $i\ge 2$, $j=n$,}\\
A_{i+1,j+1}&\text{otherwise.}
\end{cases}
\end{aligned}
\end{equation*}
These calculations, together with the pure braid relations \eqref{eq:purebraidrels}, can be used to check that 
$\omega_1\cdots \omega_{n-1}\omega_n^2\omega_{n-1}\cdots \omega_1=1$.
 
To verify that $(\omega_1 \omega_2 \cdots \omega_n)^{n+1}=1$, first note that the relations 
$\omega_i\omega_{i+1}\omega_i=\omega_{i+1}\omega_i\omega_{i+1}$ for $1\le i\le n$ and $\omega_j\omega_i=\omega_i\omega_j$ for $|j-i|\ge 2$ imply that 
\[
\begin{aligned}
(\omega_1 \omega_2 \cdots \omega_n)^{n+1}&=(\omega_1 \omega_2 \cdots \omega_{n-1})^{n}\cdot \omega_n\cdots \omega_2
\omega_1^2\omega_2\cdots \omega_n,\ \text{and}\\
\omega_n\cdots \omega_2\omega_1^2\omega_2\cdots \omega_n&=(\omega_n\cdots \omega_1)\cdot 
\omega_1\cdots \omega_{n-1}\omega_n^2\omega_{n-1}\cdots \omega_1 (\omega_n\cdots \omega_1)^{-1}.
\end{aligned}
\]
Since $\omega_1\cdots \omega_{n-1}\omega_n^2\omega_{n-1}\cdots \omega_1=1$ by the previous paragraph, it suffices to check that 
$(\omega_1 \omega_2 \cdots \omega_{n-1})^{n}=1$.

Write $\tau=\omega_1\cdots \omega_{n-1}$.  We must show that $\tau^n=1$. The action of $\tau$ on the pure braid generators $A_{i,j}$ is given above.  In particular, $\tau(A_{i,j})=A_{i-1,j-1}$ for $i\ge 2$ and $j\ge 4$.  Also, note that for $j \ge 3$, the pure braid relations \eqref{eq:purebraidrels} may be used to show that 
\[
\tau(A_{1,j})=A_{j-1,n}^{A_{j,n}\cdots A_{n-1,n}(A_{1,j-1}\cdots A_{j-2,j-1})^{-1}}
=A_{j-1,n}^{A_{1,n}\cdots A_{n-1,n}}.
\]

Observe that $\tau^{n-3}(A_{n-1,n})=A_{2,3}$.  Consequently, $\tau^{n-2}(A_{n-1,n})=Z_n^{-1}A_{1,2}$, and $\tau^{n-1}(A_{n-1,n})=A_{1,n}^{A_{2,n}\cdots A_{n-1,n}}$. 
A calculation then reveals that $\tau^n(A_{n-1,n})=A_{n-1,n}$. It follows that $\tau^{n-k}(A_{k-1,k})=A_{n-1,n}$ for $k\ge 3$, which implies that $\tau^n(A_{k-1,k})=A_{k-1,k}$ for $k\ge 3$.

If $i=j-k$ with $k\ge 2$ (so that $j\ge 3$), then $A_{i,j}=A_{j-k,j}=\tau^{n-j}(A_{n-k,n})$.  If $\tau^j(A_{i,j})=A_{n-k,n}$, it follows that $\tau^n(A_{n-k,n})=A_{n-k,n}$ and then that $\tau^n(A_{i,j})=A_{i,j}$. Thus, it suffices to show that $\tau^j(A_{i,j})=A_{n-k,n}$. 
If $i>1$, then $\tau^{i-1}(A_{i,j})=A_{1,j-i+1}$.  So it is enough to show that $\tau^q(A_{1,q})=A_{n-q+1,n}$, where $q \ge 3$. Checking that
\[
\tau^p(A_{1,q})=A_{q-p,n-p+1}^{A_{1,n-p+1}\cdots A_{n-p,n-p+1} \cdot A_{n-p+1,n-p+2}\cdots A_{n-p+1,n}}
\]
for $1\le p \le q-1$, we have
\[
\begin{aligned}
\tau^q(A_{1,q})&=\tau(A_{1,n-q}^{A_{2,n-q}\cdots A_{n-q-1,n-q} \cdot A_{n-q,n-q+1}\cdots A_{n-q,n}})\\
&=A_{n-q-1,n}^{A_{n-q,n}\cdots A_{n-1,n} \cdot A_{n-q-1,n-q}\cdots A_{n-q-1,n}}
\end{aligned}
\]
A calculation with the pure braid relations \eqref{eq:purebraidrels} then shows that $\tau^q(A_{1,q})=A_{n-q+1,n}$.

It remains to check that $\epsilon^2=1$ and $(\epsilon \omega_k)^2=1$ for each $k$, $1\le k \le n$.  The first of these is straightforward.  For the remaining ones, note that 
$\epsilon(A_{i,j} \cdots A_{j-1,j})=
(A_{i,j} \cdots A_{j-1,j})^{-1}$ for $i>1$ and $j>2$, $\epsilon(A_{i,i+1}\cdots A_{i,j})=(A_{i,i+1}\cdots A_{i,j})^{-1}$ for $i>1$,  while $\epsilon(A_{1,2}\cdots A_{1,j})=(A_{1,2}\cdots A_{1,j})^{-1}Z_n^2$.  These observations, together with the pure braid relations \eqref{eq:purebraidrels} may be used to verify that $\omega_k \epsilon \omega_k = \epsilon$ for each $k$, $1\le k \le n$.
\end{proof}

Thus, the elements $\omega_1,\dots,\omega_n$ and $\epsilon$ of $\Aut(P_n)$ satisfy the relations of the extended mapping class group $\Mod(\bS_{n+1})$. By construction, these elements of $\Aut(P_n)$ induce the automorphisms of 
$\bar{P}_n \cong \mathrm{P}\!\Mod(\bS_{n+1})$ corresponding to conjugation by the generators (with the same names) of $\Mod(\bS_{n+1})$ upon passing to the quotient.

\begin{theorem} \label{thm:autpres}
For $n\ge 4$, the automorphism group $\Aut(P_n)$ of the pure braid group admits a presentation with generators
\[
\epsilon, \ \omega_k,\ 1\le k \le n,\ \psi,\ \phi_{i,j}, \ 1\le i < j \le n,\ \{i,j\}\neq \{1,2\},
\]
and relations
\begin{equation*}
\begin{aligned}
&\begin{matrix} 
\omega_i\omega_j=\omega_j\omega_i,\ |i-j|\ge 2, \hfill
&\omega_i \omega_{i+1} \omega_i= \omega_{i+1} \omega_i \omega_{i+1},\  i< n, 
&\epsilon^2=1, \hfill\\[3pt]
(\omega_1 \omega_2 \cdots \omega_n)^{n+1}=1,\hfill 
&\omega_1\cdots \omega_{n-1}\omega_n^2\omega_{n-1}\cdots \omega_1=1,\hfill
&(\epsilon \omega_k)^2=1,\  k \le n,\\[3pt]
\psi\phi_{i,j}^{} \psi=\phi_{i,j}^{-1},\ \forall i,j, \hfill
&\phi_{i,j}\phi_{p,q}=\phi_{p,q}\phi_{i,j},\ \forall i,j,p,q\hfill
&\psi^2=1,\hfill \\[3pt]
\epsilon\psi \epsilon=\psi, \hfill
&\omega_k^{-1}\psi^{}_{}\omega_k^{}=\psi,\ k\le n \hfill
&\epsilon\phi_{i,j}^{} \epsilon=\phi_{i,j}^{-1},  \hfill
\end{matrix}
\end{aligned}
\end{equation*}
\begin{equation*}
\begin{aligned}
&\begin{matrix}
\omega_1^{-1}\phi_{i,j}^{}\omega_1^{}=\begin{cases}
\phi_{2,j}&i=1,\\ \phi_{1,j}&i=2,\\ 
\phi_{i,j}&\text{otherwise,}
\end{cases}\qquad \qquad
&
\omega_2^{-1}\phi_{i,j}^{}\omega_2^{}=\begin{cases}
\phi_{1,3}^{-1}&i=1,j=3,\\ 
\phi_{1,3}^{-1}\phi_{3,j}^{}&i=2, j> 3,\\ 
\phi_{1,3}^{-1}\phi_{2,j}^{}&i= 3,\\ 
\phi_{1,3}^{-1}\phi_{i,j}^{}&\text{otherwise,}
\end{cases} 
\end{matrix}\\
&
\omega_k^{-1}\phi_{i,j}^{}\omega_k^{}=\begin{cases}
\phi_{i-1,j}&k=i-1,\\ \phi_{i+1,j}&k=i<j-1,\\
\phi_{i,j-1}&k=j-1>i,\\ \phi_{i,j+1}&k=j,\\ 
\phi_{i,j}&\text{otherwise,}
\end{cases}\quad \text{for $3\le k \le n-1$,}\\
&
\omega_n^{-1}\phi_{i,j}^{}\omega_n^{}=\begin{cases}
\phi_{i,j}{}\phi_{1,n}^{}\phi_{2,n}^{}\phi_{i,n}^{-1}\phi_{j,n}^{-1}&j<n,\\ 
\phi_{i,n}^{-1}\phi_{1,n}^{}\phi_{2,n}^{}&j=n.
\end{cases}
\end{aligned}
\end{equation*}
\end{theorem}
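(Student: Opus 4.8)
The plan is to realise $\Aut(P_n)$ as an internal semidirect product $\tv(P_n)\rtimes H$, where $H=\langle\omega_1,\dots,\omega_n,\epsilon\rangle\cong\Mod(\bS_{n+1})$ is the subgroup generated by the explicit automorphisms of \eqref{eq:mcgautos}, and then to read off the presentation from the standard presentation of a split extension, once the displayed conjugation formulas have been checked to describe the action of $H$ on $\tv(P_n)$.

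\emph{Step 1: the two factors.} For $n\ge 4$ we have $P_n\cong Z\times\bar{P}_n$ with $Z=\langle Z_n\rangle$ infinite cyclic and $\Aut(\bar{P}_n)\cong\Mod(\bS_{n+1})$ by Proposition~\ref{prop:CC}, so \eqref{eq:splitaut} yields $\Aut(P_n)\cong\tv(P_n)\rtimes\Aut(\bar{P}_n)$ with $\tv(P_n)\cong\Z^N\rtimes\Z_2$, $N=\binom{n}{2}-1$. First I would record that the transvections $\psi,\phi_{i,j}$ of \eqref{eq:transvections} generate $\tv(P_n)$ with the asserted presentation: the $\phi_{i,j}$ correspond to the integer vectors $e_{1,2}-e_{i,j}$ and hence freely generate the rank-$N$ lattice of transvections fixing $Z_n$, while $\psi$ maps onto the generator of the quotient $\Z_2$; together with the identities $\psi^2=1$ and $\psi\phi_{i,j}\psi=\phi_{i,j}^{-1}$ already noted in the text, this gives $\tv(P_n)=\langle\,\psi,\phi_{i,j}\mid\psi^2=1,\ \phi_{i,j}\phi_{p,q}=\phi_{p,q}\phi_{i,j},\ \psi\phi_{i,j}\psi=\phi_{i,j}^{-1}\,\rangle$.

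\emph{Step 2: $H$ is a complement.} By Proposition~\ref{prop:mcgrels} the automorphisms $\omega_1,\dots,\omega_n,\epsilon$ satisfy the defining relations of $\Mod(\bS_{n+1})$ (with $m=n+1$), so there is a surjection $\Mod(\bS_{n+1})\twoheadrightarrow H$. By construction each of these automorphisms fixes $Z_n$ and induces on $\bar{P}_n\cong\mathrm{P}\!\Mod(\bS_{n+1})$ the automorphism given by conjugation by the like-named generator of $\Mod(\bS_{n+1})$; hence the composite $\Mod(\bS_{n+1})\twoheadrightarrow H\to\Aut(P_n)/\tv(P_n)\cong\Aut(\bar{P}_n)\cong\Mod(\bS_{n+1})$ carries each standard generator to itself and is therefore the identity. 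Consequently $\Mod(\bS_{n+1})\to H$ is an isomorphism, $H\cap\tv(P_n)=1$, and $H$ surjects onto $\Aut(\bar{P}_n)$; since $\tv(P_n)$ is normal, $\Aut(P_n)=\tv(P_n)\rtimes H$.

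\emph{Step 3: assembling the presentation, and the main obstacle.} For a split extension $1\to N\to G\to Q\to 1$ with chosen complement, $G$ is presented by the generators of $N$ and of $Q$, a presentation of $N$, a presentation of $Q$ pulled back through the complement, and one conjugation relation $q^{-1}xq=\overline{w}_{q,x}$ for each generator $q$ of $Q$ and $x$ of $N$, where $\overline{w}_{q,x}\in N$ is a word representing $q^{-1}xq$. Applying this with $N=\tv(P_n)$, $Q=\Mod(\bS_{n+1})$ and the complement $H$, the $Q$-relations and $N$-relations are exactly the first three rows of the list in the statement. It then remains to verify that the remaining relations correctly record the $H$-action on $\tv(P_n)$, that is, to compute $\omega_k^{-1}\psi\omega_k$, $\epsilon\psi\epsilon$, $\omega_k^{-1}\phi_{i,j}\omega_k$ and $\epsilon\phi_{i,j}\epsilon$ as words in $\psi,\phi_{i,j}$ (one direction suffices, since for each $k$ the assignment $\phi_{i,j}\mapsto\omega_k^{-1}\phi_{i,j}\omega_k$ visibly extends to an automorphism of $\tv(P_n)$). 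These identities follow by composing the explicit automorphisms of \eqref{eq:transvections} and \eqref{eq:mcgautos} and simplifying with the pure braid relations \eqref{eq:purebraidrels}: every $\omega_k$ and $\epsilon$ fixes $Z_n$, so each commutes with $\psi$; $\epsilon$ inverts every generator modulo the centre, whence $\epsilon\phi_{i,j}\epsilon=\phi_{i,j}^{-1}$; the $\omega_k$ with $k\notin\{2,n\}$ and $\omega_1$ merely relabel the indices of the $A_{i,j}$, hence conjugate $\phi_{i,j}$ to the correspondingly relabelled generator; and the extra factors $\phi_{1,3}^{\pm1}$ in the $\omega_2$ column and $\phi_{1,n},\phi_{2,n}$ in the $\omega_n$ column are precisely the transvection corrections built into the definitions of $\omega_2$ and $\omega_n$ (the factors of $Z_n$ appearing in \eqref{eq:mcgautos}). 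The conceptual work is all in Step~2; the main labour — and the place an error is most likely to creep in — is this last verification, a long but routine bookkeeping of conjugacies in $\Aut(P_n)$, with the $\omega_2$ and $\omega_n$ cases demanding the most care.
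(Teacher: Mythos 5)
Your proposal is correct and takes essentially the same route as the paper: the split exact sequence \eqref{eq:splitaut}/\eqref{eq:semi}, generation of $\Aut(P_n)$ by the transvections $\psi,\phi_{i,j}$ together with the lifted generators $\omega_k,\epsilon$, Proposition~\ref{prop:mcgrels} for the $\Mod(\bS_{n+1})$-relations, and a final computational verification of the conjugation formulas using \eqref{eq:transvections} and \eqref{eq:mcgautos}. One caution on a side remark: ``each $\omega_k$ and $\epsilon$ fixes $Z_n$, so each commutes with $\psi$'' is not a valid inference on its own (the $\phi_{i,j}$ also fix $Z_n$, yet $\psi\phi_{i,j}\psi=\phi_{i,j}^{-1}$); what the explicit formulas actually give, and what is needed, is that the maps induced on the abelianization preserve the homomorphism $t$ defining $\psi$.
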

\begin{proof}
Recall from \eqref{eq:splitaut} and \eqref{eq:semi} that there is a split, short exact sequence
\[
1\to \tv(P_n) \to \Aut(P_n) \leftrightarrows \Mod(\bS_{n+1}) \to 1.
\]
Since the automorphisms $\psi$ and $\phi_{i,j}$ generate the transvection subgroup $\tv(P_n)$, and the automorphisms $\epsilon$ and $\omega_k$ induce the generators of $\Mod(\bS_{n+1})=\Aut(\bar{P}_n)$, these automorphisms collectively generate $\Aut(P_n)$.  By Proposition \ref{prop:mcgrels}, the automorphisms $\epsilon$ and $\omega_k$ satisfy the extended mapping class group relations.  As noted previously, the formulas \eqref{eq:transvections} may be used to show that the transvections $\psi$ and $\phi_{i,j}$ satisfy $\psi^2=1$ and $\psi\phi_{i,j}\psi=\phi_{i,j}^{-1}$.  So it suffices to show that the actions of the automorphisms $\epsilon$ and $\omega_k$ on the transvections $\psi$ and $\phi_{i,j}$ are as asserted. This may be accomplished by calculations with the explicit descriptions of these automorphisms given in \eqref{eq:transvections} and \eqref{eq:mcgautos}.
\end{proof}

\begin{remark} Theorem \ref{thm:autpres} exhibits the semidirect product structure of $\Aut(P_n) \cong \tv(P_n) \rtimes \Mod(\bS_{n+1})$.  Recall that $\Mod(\bS_{n+1})=M(0,n+1) \rtimes \Z_2$ is itself the semidirect product of the (non-extended) mapping class group and $\Z_2$.  Note that the generator $\psi$ of $\tv(P_n) < \Aut(P_n)$ commutes with the generators $\epsilon,\omega_1,\dots,\omega_n$ of $\Aut(P_n)$ which induce the generators of $\Mod(\bS_{n+1})$.  It follows that $\Aut(P_n)$ may be realized as the iterated semidirect product $\Aut(P_n) \cong (\Z^N \rtimes M(0,n+1)) \rtimes (\Z_2 \times \Z_2)$.
\end{remark}

Similar considerations yield a 
presentation for the automorphism group of the three strand pure braid group $P_3\cong \Z \times F_2$.  In this case, the split extension \eqref{eq:splitaut} yields $\Aut(P_3) \cong \tv(P_3) \rtimes \Aut(F_2)$, 
where $\tv(P_3)\cong \Z^2 \rtimes \Z_2$, generated by $\psi,\phi_{1,3},\phi_{2,3}$ 
with $\psi^2=1$ and $\psi\phi_{i,j}^{}\psi=\phi_{i,j}^{-1}$ 
(see \eqref{eq:transvections}), and 
\[
F_2 = \bar{P}_3=P_3/Z(P_3)=P_3/\langle A_{1,2}A_{1,3}A_{2,3}\rangle = \langle A_{1,3}, A_{2,3} \rangle
\]
is the free group on two generators.  The group $\Aut(F_2)$ admits the following presentation, due to Neumann (see \cite[\S3.5, Prob.~2]{MKS}):
\[
\Aut(F_2) = \langle P,\sigma,U \mid P^2,\sigma^2,(\sigma P)^4,(P\sigma PU)^2,(UP\sigma)^3,[U,\sigma U \sigma]\rangle,
\]
where the automorphisms $P,\sigma,U$ of $F_2$ are given by
\[
\begin{matrix}
P(A_{1,3}^{})=A_{2,3}, \quad & \sigma(A_{1,3}^{})=A_{1,3}^{-1}, \quad & U(A_{1,3}^{})=A_{1,3}^{}A_{2,3}^{},\\
P(A_{2,3}^{})=A_{1,3},\hfill & \sigma(A_{2,3}^{})=A_{2,3}^{},\hfill & U(A_{2,3}^{})=A_{2,3}^{}.\hfill
\end{matrix}
\]
Lifts of these automorphisms to automorphisms of $P_3$ fixing $Z_3=A_{1,2}A_{1,3}A_{2,3}$ are given by setting 
\[
P(A_{1,2}^{})=A_{2,3}A_{1,2}A_{2,3}^{-1},\quad
\sigma(A_{1,2}^{})=A_{1,2}^{}A_{1,3}^2,\quad
U(A_{1,2}^{})=A_{2,3}^{-1}A_{1,2}.
\]
Calculations with these formulas yield the following result.
\begin{proposition}
The automorphism group $\Aut(P_3)$ of the three strand pure braid group admits a presentation with generators $P,\ \sigma,\ U,\ \psi,\ \phi_{1,3},\ \phi_{2,3}$,
and relations 
\[
\begin{matrix}
P^2, & \sigma^2, & (\sigma P)^4, & (P\sigma PU)^2, & (UP\sigma)^3, & [U,\sigma U \sigma],\\
[U,\psi], & [P,\psi], & [\sigma,\psi], & [U,\phi_{1,3}],& P\phi_{1,3}P\phi_{2,3}^{-1}, & (\sigma\phi_{1,3})^2, \\
 \psi^2, & (\psi\phi_{i,3})^2, & [\phi_{1,3},\phi_{2,3}], & \phi_{1,3}[\phi_{2,3}^{},U],& P\phi_{2,3}^{}P\phi_{1,3}^{-1}, & [\sigma,\phi_{2,3}^{}].
\end{matrix}
\]
\end{proposition}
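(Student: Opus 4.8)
The plan is to run the proof of Theorem~\ref{thm:autpres} in the case $n=3$, where $\bar P_3=F_2$. Since $P_3\cong Z(P_3)\times\bar P_3$ with $\bar P_3=\langle A_{1,3},A_{2,3}\rangle\cong F_2$ centerless, the split extension \eqref{eq:splitaut} takes the form $1\to\tv(P_3)\to\Aut(P_3)\leftrightarrows\Aut(F_2)\to 1$, and the explicit splitting sends $\alpha\in\Aut(F_2)$ to the automorphism $\tilde\alpha$ of $P_3$ that fixes $Z_3=A_{1,2}A_{1,3}A_{2,3}$ and restricts to $\alpha$ on $\langle A_{1,3},A_{2,3}\rangle$. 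Because $A_{1,2}=Z_3A_{2,3}^{-1}A_{1,3}^{-1}$, the value of $\tilde\alpha$ on $A_{1,2}$ is forced, and one verifies using the pure braid relations \eqref{eq:purebraidrels} that for $\alpha\in\{P,\sigma,U\}$ it is given by the displayed formulas; hence the six automorphisms $P,\sigma,U,\psi,\phi_{1,3},\phi_{2,3}$ generate $\Aut(P_3)$. It then remains to check four things: (i) that $P,\sigma,U$ satisfy Neumann's six relations, which is automatic because $\alpha\mapsto\tilde\alpha$ is a group homomorphism and the relations therefore lift from $\Aut(F_2)$; (ii) that $\psi,\phi_{1,3},\phi_{2,3}$ satisfy $\psi^2=1$, $(\psi\phi_{i,3})^2=1$ and $[\phi_{1,3},\phi_{2,3}]=1$, exactly as in the proof of Theorem~\ref{thm:autpres}, so that $\tv(P_3)\cong\Z^2\rtimes\Z_2$; (iii) that the conjugation action of $P,\sigma,U$ on $\psi,\phi_{1,3},\phi_{2,3}$ is as listed; and (iv) that these relations are complete, which follows from the standard presentation of a semidirect product $\tv(P_3)\rtimes\Aut(F_2)$ built from presentations of the two factors together with the relations expressing the action.

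For step (iii), the key observation is that each lift fixes $Z_3$ and each $\phi_{i,3}$ preserves $Z_3$, so the conjugation action of $\Aut(F_2)$ on the rank-two lattice $\langle\phi_{1,3},\phi_{2,3}\rangle\cong\Z^2$ of $Z_3$-preserving transvections factors through the standard homomorphism $\Aut(F_2)\to GL_2(\Z)$ induced on $H_1(\bar P_3)$ (more precisely through its contragredient, since a transvection is parametrized by a homomorphism to $\Z$). Under this homomorphism $P$ maps to the transposition matrix, $\sigma$ to $\operatorname{diag}(-1,1)$, and $U$ to an elementary transvection matrix, while $\phi_{1,3}$ and $\phi_{2,3}$ generate the $\Z^2$; reading off the images then yields precisely $P\phi_{1,3}P\phi_{2,3}^{-1}=1$, $P\phi_{2,3}P\phi_{1,3}^{-1}=1$, $(\sigma\phi_{1,3})^2=1$, $[\sigma,\phi_{2,3}]=1$, $[U,\phi_{1,3}]=1$, and $\phi_{1,3}[\phi_{2,3},U]=1$. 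The remaining relations $[P,\psi]=[\sigma,\psi]=[U,\psi]=1$ assert that $\psi$ is conjugation-invariant; this follows from the same computation carried out on $H_1(P_3)\cong\Z^3$ with basis $[A_{1,2}],[A_{1,3}],[A_{2,3}]$, where the parameter of $\psi$ is the functional sending $[A_{1,2}]\mapsto-2$ and $[A_{1,3}],[A_{2,3}]\mapsto0$ and is fixed by the transposed $H_1(P_3)$-matrices of $P,\sigma,U$ (a short check using $P(A_{1,2})=A_{2,3}A_{1,2}A_{2,3}^{-1}$, $\sigma(A_{1,2})=A_{1,2}A_{1,3}^2$, $U(A_{1,2})=A_{2,3}^{-1}A_{1,2}$).

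The main obstacle is the bookkeeping in step (iii): the conjugations must be carried out on the nose in the author's right-handed convention $\alpha\cdot\beta=\beta\circ\alpha$, with care taken over signs and over the distinction between a transvection and its inverse; one also wants to confirm that the explicit formulas for $P(A_{1,2})$, $\sigma(A_{1,2})$ and $U(A_{1,2})$ in the statement really do describe the canonical lift, which reduces to a few identities from \eqref{eq:purebraidrels} such as $A_{1,2}^{-1}A_{2,3}A_{1,2}=A_{1,3}A_{2,3}A_{1,3}^{-1}$. Once these finitely many identities are verified, completeness of the presentation is immediate from the semidirect-product structure $\Aut(P_3)\cong\tv(P_3)\rtimes\Aut(F_2)$, which finishes the proof.
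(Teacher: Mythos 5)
Your proposal is correct and follows essentially the same route as the paper: the split extension \eqref{eq:splitaut} giving $\Aut(P_3)\cong\tv(P_3)\rtimes\Aut(F_2)$, Neumann's presentation of $\Aut(F_2)$, the explicit lifts of $P,\sigma,U$ fixing $Z_3$, and direct verification of the action relations. Your observation that the action on the transvections factors through the (contragredient of the) induced action on $H_1$ is just a tidy way of organizing the ``calculations with these formulas'' that the paper leaves to the reader.
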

\begin{remark} Note that the generator $\psi$ of $\tv(P_3)< \Aut(P_3)$ commutes with the generators $P,\sigma,U$ of $\Aut(P_3)$ which project to the generators of $\Aut(F_2)$.  It follows that 
$\Aut(P_3) \cong \Z^2 \rtimes (\Z_2 \times \Aut(F_2))$.
\end{remark}

Since the two strand pure braid group $P_2=\Z$ is infinite cyclic, $\Aut(P_2)=\Z_2$.

\section{Automorphisms of the pure monomial braid group}
As discussed for example in \cite[\S3]{BMR}, the full monomial braid group $B(r,n)=B(2,n)$ embeds in the Artin braid group $B_{n+1}$.  
In terms of the standard generators $\sigma_i$, $1\le i \le n$, of $B_{n+1}$ and the generators $\rho_j$, $0\le j \le n-1$ of $B(r,n)$, one choice of embedding is 
given by $\rho_0 \mapsto \sigma_1^2$ and $\rho_j \mapsto \sigma_{j+1}$ for $j\neq 0$. 
Restricting to the pure monomial braid group yields a monomorphism $P(r,n) \to P_{n+1}$.  In terms of the generators \eqref{eq:Pn gens} of $P_{n+1}$ and \eqref{eq:PureMonomialGens} of $P(r,n)$, this is given by
\[
C_j\mapsto A_{1,j+1}^r,\quad
A_{i,j}^{(q)}\mapsto (A_{1,i+1}  \cdots A_{i,i+1})^{q-r} A_{i+1,j+1}
(A_{1,i+1}  \cdots A_{i,i+1})^{r-q}.
\]
Recall the generators $Z_{n+1}=(\sigma_1\cdots\sigma_n)^{n+1}$ and $\zeta_n=(\rho_0\cdots\rho_{n-1})^n$ of the centers $Z(B_{n+1})=Z(P_{n+1})$ and $Z(B(r,n))$, and that $Z(P(r,n))$ is generated by $\zeta_n^r$.  It is readily checked that the above embedding takes $\zeta_n$ to $Z_{n+1}$.  Consequently, the group $\bar{P}(r,n)=P(r,n)/Z(P(r,n))$ may be realized as a (finite index) subgroup of $\bar{P}_{n+1}=P_{n+1}/Z(P_{n+1})$.

Composing with the map $B_{n+1} \to \Mod(\bS_{n+2})$ given by $\sigma_i \mapsto \omega_i$ realizes $\bar{P}(r,n)$ as a finite index subgroup of the extended mapping class group $\Gamma=\Mod(\bS_{n+2})$.  Hence, for $n \ge 3$, we have $\Aut(\bar{P}(r,n)) \cong N_\Gamma(\bar{P}(r,n))$ by Proposition \ref{prop:CC}.  
Since $P(r,n) \cong Z(P(r,n)) \times \bar{P}(r,n)$, the split extension \eqref{eq:splitaut} yields a semidirect product decomposition
$\Aut(P(r,n)) \cong \tv(P(r,n)) \rtimes \Aut(\bar{P}(r,n))$.  Thus, for $n\ge 3$, the automorphism group of the pure monomial braid group may be realized as
\begin{equation*} \label{eq:monosemi}
\Aut(P(r,n)) \cong \tv(P(r,n)) \rtimes N_\Gamma(\bar{P}(r,n)).
\end{equation*}

\begin{lemma} \label{lem:monotrans}
Let $N_r=r\binom{n}{2}+n-1$. 
The transvection subgroup of the automorphism group of the pure monomial braid group is given by 
$\tv(P(r,n)) \cong \Z^{N_r} \rtimes \Z_2$, 
where $\Z_2$ acts on $\Z^{N_r}$ by taking elements to their inverses.
\end{lemma}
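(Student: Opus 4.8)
The plan is to mimic the structure of the argument given for $\tv(P_n)$ in Section 3, adapting it to the pure monomial braid group. First I would recall that, since $P(r,n) \cong Z(P(r,n)) \times \bar{P}(r,n)$ with $Z(P(r,n)) = \langle \zeta_n^r\rangle$ infinite cyclic (Lemma \ref{lem:monocenter}), the discussion of transvections in Section 2 applies: $\tv(P(r,n))$ consists precisely of the transvection automorphisms, i.e.\ maps of the form $g \mapsto g\,(\zeta_n^r)^{t(g)}$ for a homomorphism $t\colon P(r,n) \to \Z$, and such a map is an automorphism exactly when $t(\zeta_n^r) \in \{0,-2\}$. Thus $\tv(P(r,n))$ is determined by $\operatorname{Hom}(P(r,n),\Z) = \operatorname{Hom}(H_1(P(r,n)),\Z)$ together with the constraint on the value at $\zeta_n^r$.

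The key computational step is to identify $H_1(P(r,n))$ and the image of $\zeta_n^r$ in it. Using the generators \eqref{eq:PureMonomialGens}, the group $P(r,n)$ is generated by the $C_j$ ($1\le j\le n$) and the $A_{i,j}^{(q)}$ ($1\le i<j\le n$, $1\le q\le r$), which is $n + r\binom{n}{2} = N_r + 1$ generators. From the presentation in \cite[Thm.~2.2.4]{Coh01} one checks that the relations are all commutator-type (conjugation relations), so that $H_1(P(r,n))$ is free abelian of rank $N_r+1$ on the images of these generators — this is the analogue of the fact that $H_1(P_n) \cong \Z^{\binom n2}$. Then I would use the formula $\zeta_n^r = D_1 D_2 \cdots D_n$ from Lemma \ref{lem:monocenter}, together with the definitions \eqref{eq:particularmonobraids} of $D_k$, $V_{i,j}^{(q)}$, $A_{i,j}^{[q]}$, to compute the class $[\zeta_n^r] \in H_1(P(r,n))$: each $D_k$ abelianizes to a sum of the generators $C_k$ and various $A_{i,k}^{(q)}$, so $[\zeta_n^r]$ is a sum over all generators with positive (in particular nonzero) coefficients. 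The point I need is simply that $[\zeta_n^r]$ is not divisible by $2$ in $H_1(P(r,n))$ and is a nonzero primitive-enough vector — more precisely that the linear functional "value at $\zeta_n^r$" on $\operatorname{Hom}(H_1,\Z) \cong \Z^{N_r+1}$ is surjective onto $\Z$, which holds as soon as some coefficient of $[\zeta_n^r]$ equals $\pm 1$ (e.g.\ the coefficient of $C_1$, which appears in $D_1$ only).

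Granting that, the counting argument runs exactly as for $P_n$: the homomorphisms $t$ with $t(\zeta_n^r)=0$ form the kernel of a surjection $\Z^{N_r+1}\to\Z$, hence a free abelian group of rank $N_r$, and these give the subgroup of $\tv(P(r,n))$ of transvections fixing the center generator; a single transvection sending one generator's coordinate appropriately (so that $t(\zeta_n^r)=-2$) together with $\zeta_n^r \mapsto (\zeta_n^r)^{-1}$ splits the resulting surjection $\tv(P(r,n))\to\Z_2$. Conjugating a transvection with $t(\zeta_n^r)=0$ by the order-two transvection (the analogue of $\psi$ in \eqref{eq:transvections}) inverts it, since postcomposing a transvection $g\mapsto g(\zeta_n^r)^{t(g)}$ by $\zeta_n^r\mapsto(\zeta_n^r)^{-1}$ changes the sign of $t$ on the $\Z^{N_r}$ part; this gives the asserted action of $\Z_2$ by inversion. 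Hence $\tv(P(r,n)) \cong \Z^{N_r}\rtimes\Z_2$.

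I expect the main obstacle to be the bookkeeping in the second step: extracting from \cite[Thm.~2.2.4]{Coh01} (with the noted typo corrected) that the abelianization is free of the expected rank, and then correctly expanding $D_1\cdots D_n$ via \eqref{eq:PureMonomialGens} and \eqref{eq:particularmonobraids} to pin down $[\zeta_n^r]$ in $H_1$. None of this is deep, but it is where an error would most easily creep in; everything after it is the same formal split-extension-of-transvections argument already used for $\Aut(P_n)$, so I would keep that part brief and cite the parallel discussion in Section 3.
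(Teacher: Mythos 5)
Your proposal is correct and follows essentially the same route as the paper: identify elements of $\tv(P(r,n))$ with homomorphisms $P(r,n)\to\Z$ constrained at $\zeta_n^r$, use the rank-$(N_r+1)$ free abelianization to count the kernel of the resulting surjection onto $\Z_2$, and split it with a single transvection satisfying $t(\zeta_n^r)=-2$, whose conjugation inverts the $\Z^{N_r}$ part. The only difference is presentational: you make explicit the $H_1$ computation and the class of $\zeta_n^r=D_1\cdots D_n$ (which the paper leaves implicit), while the paper instead records explicit generators $\Psi$, $\Upsilon_i$, $\Phi_{i,j,p}$ that it needs later for Theorem \ref{thm:monoautpres}.
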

\begin{proof}
For notational convenience, denote the generator of the center of $P(r,n)$ by $Z_{r,n}=\zeta_n^r$.  
In terms of the generators \eqref{eq:PureMonomialGens} of $P(r,n)$, the transvection subgroup $\tv(P(r,n))$ of $\Aut(P(r,n))$ consists of automorphisms of the form 
$C_j \mapsto C_j Z_{r,n}^{s_j}$ and $A_{i,j}^{(q)} \mapsto A_{i,j}^{(q)} Z_{r,n}^{t_{i,j,q}}$,
where $s_j,t_{i,j,q}\in\Z$ and $S=\sum_{j=1}^n s_j + \sum_{q=1}^r\sum_{1\le i<j\le n} t_{i,j,q}$ is either equal to $0$ or $-2$.  In the former case, $Z_{r,n} \mapsto Z_{r,n}$, while $Z_{r,n} \mapsto Z_{r,n}^{-1}$ in the latter.  This yields a surjection $\tv(P(r,n)) \to \Z_2$, with kernel consisting of transvections for which $ S=0$.  Since $P(r,n)$ has $N_r+1$ generators, this kernel is free abelian of rank $N_r$.  Setting $s_{1}=-2$, $s_j=0$ for $2\le j \le n$, and all $t_{i,j,q}=0$ gives a splitting $\Z_2 \to \tv(P(r,n))$.  Thus, 
$\tv(P(r,n)) \cong \Z^{N_r} \rtimes \Z_2$.  This group is generated by transvections $\Psi$, $\Upsilon_i$, $2\le i \le n$, $\Phi_{i,j,p}$, $1\le i <j\le n$, $1\le p \le r$, of $P(r,n)$, 
defined by
\begin{align} \label{eq:monotransvections}
\Psi\colon &\begin{cases} 
C_j \mapsto C_1Z_{r,n}^{-2}&\text{if $j= 1$,}\\ 
C_j \mapsto C_j&\text{if $j\neq 1$,}\\ 
A_{k,l}^{(q)}\mapsto A_{k,l}^{(q)}&\text{for all $k$, $l$, $q$,} \end{cases} \quad
\Upsilon_i\colon \begin{cases} 
C_j \mapsto C_1Z_{r,n}&\text{if $j= 1$,}\\ 
C_j \mapsto C_iZ_{r,n}^{-1}&\text{if $j=i$,}\\ 
C_j \mapsto C_j,&\text{if $j\neq 1,i$,}\\ 
A_{k,l}^{(q)}\mapsto A_{k,l}^{(q)}&\text{for all $k$, $l$, $q$,} \end{cases} \\
\Phi_{i,j,p}\colon &\begin{cases} 
C_j \mapsto C_1Z_{r,n}&\text{if $j= 1$,}\\ 
C_j \mapsto C_j&\text{if $j\neq 1$,}\\ 
A_{k,l}^{(q)}\mapsto A_{i,j}^{(p)} Z_{r,n}^{-1}&\text{if $k=i$, $l=j$, $q=p$,}\\
A_{k,l}^{(q)}\mapsto A_{k,l}^{(q)}&\text{otherwise.} \end{cases} \notag 
\end{align}
Check that the transvections $\Upsilon_i$, $\Phi_{i,j,p}$ all commute, and that $\Psi^2=1, \Psi \Upsilon_{i}\Psi = \Upsilon^{-1}_{i}$, and $\Psi \Phi_{i,j,p}\Psi = \Phi_{i,j,p}^{-1}$  to complete the proof.
\end{proof}

For $n\ge 3$, viewing the group $\bar{P}(r,n)$ as a subgroup of the extended mapping class group via the sequence of embeddings
\[
\bar{P}(r,n) \to \bar{P}_{n+1} \to \bar{B}_{n+1} \to M(0,n+2) \to \Mod(\bS_{n+2}),
\]
the group $\Mod(\bS_{n+2})$ acts on $\bar{P}(r,n)$ by conjugation.  The subgroup $\bar{P}(r,n)<
\Mod(\bS_{n+2})$ is, however, not a normal subgroup.  For instance, one can check that $\omega_1\cdot 
\bar{P}(r,n)\neq \bar{P}(r,n)\cdot\omega_1$.  Thus, the normalizer $N_\Gamma(\bar{P}(r,n))$ of $\bar{P}(r,n)$ in $\Gamma=\Mod(\bS_{n+2})$ is a proper subgroup of $\Mod(\bS_{n+2})$.

So to understand the structure of $\Aut(P(r,n))=\tv(P(r,n)) \rtimes N_\Gamma(\bar{P}(r,n))$, we must determine this normalizer.  For $n\ge 3$, the normalizer $N_\Gamma(\bar{B}(r,n))$ of $\bar{B}(r,n)=\bar{B}(2,n)=B(2,n)/Z(B(2,n))$ in $\Gamma=\Mod(\bS_{n+2})$ was found by Charney and Crisp 
\cite[Prop.~10]{CC05}:
\begin{equation}\label{eq:Bnormalizer}
N_\Gamma(\bar{B}(r,n)) \cong \bar{B}(r,n))\rtimes (\Z_2 \times \Z_2).
\end{equation}
Identifying the generators of $\bar{B}(r,n)$ with their images in $\Mod(\bS_{n+2})$, the group $N_\Gamma(\bar{B}(r,n))$ has generators
$\rho_0=\omega_1^2, \rho_1=\omega_2,\dots,\rho_{n-1}=\omega_n, \epsilon, \Delta$,
where 
\[
\Delta = \omega_1 \cdots \omega_{n+1}\cdot \omega_1 \cdots \omega_n\cdot \omega_1 \cdots \omega_{n-1} \cdots\cdots \omega_1\cdot\omega_2\cdot\omega_1
\]
in $\Mod(\bS_{n+2})$. Note that $\Delta^2=(\omega_1\cdots\omega_{n+1})^{n+2}=1$.  
The elements  $\epsilon$ and $\Delta$ generate $\Z_2\times\Z_2$.  Their action on $\bar{B}(r,n))$ is given by $\epsilon\colon\rho_i\mapsto \rho_i^{-1}$ and 
\begin{equation*} \label{eq:Deltaaction}
\Delta\colon \rho_i \mapsto
\begin{cases}
(\rho_{n-1} \cdots \rho_1 \rho_0 \rho_1 \cdots \rho_{n-1})^{-1}&\text{if $i=0$,}\\
\rho_{n-i}&\text{if $1\le i \le n-1$.}
\end{cases}
\end{equation*}

\begin{proposition} \label{prop:Pnormalizer}
Let $\Gamma=\Mod(\bS_{n+2})$. For $n \ge 3$, $N_\Gamma(\bar{P}(r,n))=N_\Gamma(\bar{B}(r,n))$.
\end{proposition}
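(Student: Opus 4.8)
The plan is to prove the two inclusions $N_\Gamma(\bar P(r,n)) \supseteq N_\Gamma(\bar B(r,n))$ and $N_\Gamma(\bar P(r,n)) \subseteq N_\Gamma(\bar B(r,n))$ separately, using the known description \eqref{eq:Bnormalizer} of $N_\Gamma(\bar B(r,n))$ as $\bar B(r,n) \rtimes (\Z_2 \times \Z_2)$ with the explicit generators $\rho_0 = \omega_1^2, \rho_1 = \omega_2, \dots, \rho_{n-1} = \omega_n, \epsilon, \Delta$.

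For the inclusion $N_\Gamma(\bar B(r,n)) \subseteq N_\Gamma(\bar P(r,n))$, it suffices to check that each of these generators normalizes $\bar P(r,n)$. Since $\bar P(r,n)$ is a normal subgroup of $\bar B(r,n)$ (it is the kernel of $B(r,n) \twoheadrightarrow G(r,n)$, modulo centers), every element of $\bar B(r,n)$ — in particular each $\rho_i$ — conjugates $\bar P(r,n)$ to itself. So the only work is to verify that $\epsilon$ and $\Delta$ normalize $\bar P(r,n)$. For $\epsilon$: it sends each $\rho_i$ to $\rho_i^{-1}$, hence sends the defining generators \eqref{eq:PureMonomialGens} of $P(r,n)$ (written as words in the $\rho_i$) to elements of the same form, and one checks using the explicit formula for $\epsilon$ on $P_{n+1}$-generators in \eqref{eq:mcgautos}, restricted along the embedding $P(r,n) \hookrightarrow P_{n+1}$, that $\epsilon$ preserves $\bar P(r,n)$; indeed $\epsilon$ restricted to $\bar B(r,n)$ already restricts to $\bar P(r,n)$ because inverting all the $\rho_i$ is compatible with the surjection onto $G(r,n)$. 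For $\Delta$: the action $\Delta\colon \rho_i \mapsto \rho_{n-i}$ for $1 \le i \le n-1$ and $\rho_0 \mapsto (\rho_{n-1}\cdots\rho_1\rho_0\rho_1\cdots\rho_{n-1})^{-1} = X_n^{-1}\rho_0^{-1}X_n \cdot(\text{adjust})$ is an automorphism of $\bar B(r,n)$ that is again compatible with $B(r,n) \twoheadrightarrow G(r,n)$ (it is essentially the diagram automorphism reversing the type-$B$ Dynkin diagram), so it too preserves the kernel $\bar P(r,n)$. Thus $\epsilon, \Delta \in N_\Gamma(\bar P(r,n))$, giving one inclusion.

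For the reverse inclusion $N_\Gamma(\bar P(r,n)) \subseteq N_\Gamma(\bar B(r,n))$, the key point is that $\bar B(r,n)$ is recovered from $\bar P(r,n)$ intrinsically inside $\Gamma$. Concretely, I would argue that $\bar B(r,n)$ is the largest intermediate subgroup $\bar P(r,n) \le H \le \Gamma$ in which $\bar P(r,n)$ has finite index with quotient $G(r,n)$, or — more robustly — invoke commensurator considerations: any $\gamma \in \Gamma$ normalizing $\bar P(r,n)$ lies in the commensurator of $\bar B(r,n)$ in $\Gamma$, and since $\bar B(r,n)$ has finite index in $\Gamma$ when $r=2$ but not in general, one instead uses that $\bar B(r,n) = \bar B(2,n)$ has a canonical characterization as $N_{\bar B_{n+1}}(\bar P(r,n))$ or as the subgroup generated by $\bar P(r,n)$ together with the order-considerations forcing $\omega_1^2 \in \bar B(r,n)$. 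Then $\gamma$ conjugates $\bar B(r,n)$ — built canonically from $\bar P(r,n)$ — to the subgroup built the same way from $\gamma^{-1}\bar P(r,n)\gamma = \bar P(r,n)$, namely back to $\bar B(r,n)$; hence $\gamma \in N_\Gamma(\bar B(r,n))$.

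The main obstacle is making that last canonical characterization of $\bar B(r,n)$ in terms of $\bar P(r,n)$ precise and $\Gamma$-equivariant — i.e. identifying a subgroup constructed functorially from the pair $(\Gamma, \bar P(r,n))$ that equals $\bar B(r,n)$. The natural candidate is the normalizer of $\bar P(r,n)$ inside $\bar P_{n+1}$ together with the extra finite-order data; showing this reproduces exactly $\bar B(r,n)$ and not something larger is where the real content lies, and it will likely require understanding the normal closure structure of $\bar P(r,n)$ in $\bar B_{n+1}$. Once that is in hand, both inclusions combine to give $N_\Gamma(\bar P(r,n)) = N_\Gamma(\bar B(r,n))$, completing the proof.
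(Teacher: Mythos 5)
Your first inclusion, $N_\Gamma(\bar{B}(r,n))\subseteq N_\Gamma(\bar{P}(r,n))$, is essentially the paper's argument: check the generators $\rho_0,\dots,\rho_{n-1},\epsilon,\Delta$ of $N_\Gamma(\bar{B}(r,n))$ one at a time, the $\rho_i$ being immediate from $\bar{P}(r,n)\triangleleft\bar{B}(r,n)$. Your verification of $\epsilon$ and $\Delta$ by descent along $B(r,n)\twoheadrightarrow G(r,n)$ can be made to work (for $\Delta$, the image of $\rho_0$ is $X_n^{-1}$, which is conjugate in $G(r,n)$ to $\rho_0^{-1}$ and so still has order dividing $r$; note $\Delta$ is \emph{not} a Dynkin-diagram symmetry, since the type-$B$ diagram has none --- it is induced by conjugation by the half-twist in $B_{n+1}$), whereas the paper verifies $\Delta$ directly on the generators \eqref{eq:PureMonomialGens}, using $\Delta(X_i)=X_{n-i+1}^{-1}$ together with $X_i^r=D_i\in P(r,n)$. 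Up to this point the two arguments are comparable, and one must in either case keep track of the central quotients, since $\bar{P}(r,n)=P(r,n)/\langle\zeta_n^r\rangle$ sits inside $\bar{B}(r,n)=B(r,n)/\langle\zeta_n\rangle$.

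The genuine gap is the reverse inclusion $N_\Gamma(\bar{P}(r,n))\subseteq N_\Gamma(\bar{B}(r,n))$, and you acknowledge it: no precise, $\Gamma$-equivariant reconstruction of $\bar{B}(r,n)$ from $\bar{P}(r,n)$ is produced. Moreover, two of the ideas you float do not help. The commensurator remark is vacuous: $B(r,n)$ is independent of $r$, so $\bar{B}(r,n)$ has finite index in $\Gamma$ for \emph{every} $r$ (only $\bar{P}(r,n)$ depends on $r$), and the commensurator of a finite-index subgroup is all of $\Gamma$, so membership in it carries no information; similarly, ``largest intermediate subgroup with quotient $G(r,n)$'' is not well defined without first knowing normality, which is the point at issue. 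The candidate $N_{\bar{B}_{n+1}}(\bar{P}(r,n))=\bar{B}(r,n)$ is plausible but proving it is essentially the same problem, so nothing is gained. The paper instead argues directly that elements of $\Gamma$ outside $H=N_\Gamma(\bar{B}(r,n))$ fail to normalize $\bar{P}(r,n)$: the key computation is the explicit failure $\omega_1\cdot\bar{P}(r,n)\neq\bar{P}(r,n)\cdot\omega_1$, which propagates to whole cosets of $H$ (if $h\omega_1$ normalized $\bar{P}(r,n)$ with $h\in H$, so would $\omega_1$); the underlying phenomenon is that conjugation by a mapping class moving the distinguished puncture pair changes which powers of the twist generators lie in $\bar{P}(r,n)$. (Even there, one must treat representatives of \emph{all} nontrivial cosets of $H$ in $\Gamma$, not only $\omega_1$, so some coset bookkeeping is needed.) As it stands, your proposal establishes only one of the two inclusions.
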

\begin{proof}
Since $\bar{P}(r,n)$ is normal in $\bar{B}(r,n)$, we have $\rho_i(\bar{P}(r,n))=\bar{P}(r,n)$ for each $i$, $0\le i \le n-1$.  It is straightforward to check that 
$\epsilon(\bar{P}(r,n))=\bar{P}(r,n)$.  We assert that $\Delta(\bar{P}(r,n))=\bar{P}(r,n)$ as well, which would imply that $\bar{P}(r,n)$ is normal in $N_\Gamma(\bar{B}(r,n))$.

For this, recall the monomial braids $X_i=\rho_{i-1}\cdots\rho_1\rho_0\rho_1\cdots\rho_{i-1}$, and note that $\Delta(\rho_0)=X_n^{-1}$ and more generally, $\Delta(X_i)=X_{n-i+1}^{-1}$.  Recall also from the proof of Lemma \ref{lem:monocenter} that $X_i^r=D_i$ is a pure monomial braid.  Using these observations, one can check (on the generators of $\bar{P}(r,n)$, see \eqref{eq:PureMonomialGens}) that $\Delta(\bar{P}(r,n))=\bar{P}(r,n)$.  Thus, $\bar{P}(r,n)\triangleleft N_\Gamma(\bar{B}(r,n))$.

The above considerations imply that $N_\Gamma(\bar{B}(r,n))$ is a subgroup of $N_\Gamma(\bar{P}(r,n))$, since the latter is the largest subgroup of $\Mod(\bS_{n+2})$ in which $\bar{P}(r,n)$ is normal.  However, the (right) cosets of $H=N_\Gamma(\bar{B}(r,n))$ in $\Gamma=\Mod(\bS_{n+2})$ are $H$ and $H\cdot\omega_1$, and since 
$\omega_1\cdot \bar{P}(r,n)\neq \bar{P}(r,n)\cdot\omega_1$, the same is true for any element of $H\cdot\omega_1$.  It follows that 
$N_\Gamma(\bar{P}(r,n))=N_\Gamma(\bar{B}(r,n))$.
\end{proof}

Hence we have $\Aut(P(r,n)) \cong \tv(P(r,n)) \rtimes N_\Gamma(\bar{B}(r,n))$,
and we now turn our attention to exhibiting a presentation for this group.  As done with the Artin pure braid group in the previous section, we exhibit automorphisms of $P(r,n)$ which fix the generator $Z_{r,n}=\zeta_n^r$ of the center, and induce the corresponding (conjugation) automorphisms of $\bar{P}(r,n)$ upon passing to the quotient.  

The automorphisms $\epsilon$ and $\Delta$ of $\bar{B}(r,n)$ extend to automorphisms of $B(r,n)$ 
(denoted by the same symbols) which take the generator $\zeta_n$ of the center $Z(B(r,n))$ to its inverse. 
For $\beta \in B(r,n)$, let $c_\beta\in\Aut(P(r,n))$ be the automorphism given by conjugation by $\beta$, $c_\beta(x)=\beta^{-1}x\beta$.  Recall the transvection automorphisms $\Psi$, $\Upsilon_i$, $\Phi_{i,j,p}$ of $P(r,n)$ defined in \eqref{eq:monotransvections}, and 
define elements $\tilde\rho_k$, $0\le k \le n-1$, $\tilde\epsilon$, and $\tilde\Delta$ of $\Aut(P(r,n))$ as follows:
\begin{equation} \label{eq:monoauts}
\tilde\rho_0 = c_{\rho_0},\ \tilde\rho_1= c_{\rho_1} \circ \Upsilon_2,\ \tilde\rho_k=c_{\rho_k}\ (2\le k \le n-1),\ \tilde\epsilon= \epsilon \circ \Psi,\ \tilde\Delta = \Delta \circ \Psi \circ \Upsilon_n.
\end{equation}
Since $c_\beta(Z_{r,n})=Z_{r,n}$, 
$\epsilon(Z_{r,n})=Z_{r,n}^{-1}$, 
$\Delta(Z_{r,n})=Z_{r,n}^{-1}$, $\Upsilon_j(Z_{r,n})=Z_{r,n}$, and $\Psi(Z_{r,n})=Z_{r,n}^{-1}$, each of the automorphisms defined above fixes $Z_{r,n}$.  
Explicit formulas for the actions of these automorphisms on the pure monomial braid generators \eqref{eq:PureMonomialGens} may be obtained through calculations using  
the monomial braid relations \eqref{eq:MonomialBraidRels} and the presentation for $P(r,n)$ found in \cite[Thm.~2.2.4]{Coh01} (see also \cite[Lem.~2.2.3]{Coh01}).  The results of these calculations are relegated to the next section.

\begin{proposition} \label{prop:Nrels}
The automorphisms $\tilde\rho_0,\dots,\tilde\rho_{n-1},\tilde\epsilon,\tilde\Delta\in\Aut(P(r,n))$ 
satisfy 
\begin{equation*}
\begin{array}{lll} 
\tilde\rho_i \tilde\rho_{i+1} \tilde\rho_i= \tilde\rho_{i+1} \tilde\rho_i \tilde\rho_{i+1}\ \text{for $1\le i<n$}, 
&\tilde\rho_i\tilde\rho_j=\tilde\rho_j\tilde\rho_i\ \text{for}\ |i-j|\ge 2, 
&\tilde\epsilon^2=1,\\[2pt]
(\tilde\rho_0 \tilde\rho_1 \cdots \tilde\rho_{n-1})^{n}=1,
&(\tilde\rho_0\tilde\rho_1)^2=(\tilde\rho_1\tilde\rho_0)^2,
&\tilde\Delta^2=1,\\[2pt]
\tilde\Delta\tilde\rho_0\tilde\Delta=(\tilde\rho_{n-1}\cdots\tilde\rho_1\tilde\rho_0\tilde\rho_1\cdots\tilde\rho_{n-1})^{-1},
&(\tilde\epsilon\tilde\rho_k)^2=1\ \text{for}\ 0\le k< n,
&[\tilde\epsilon,\tilde\Delta]=1, \\[2pt]  
\tilde\Delta\tilde\rho_k\tilde\Delta=\tilde\rho_{n-k}\ \text{for}\ 1\le k< n.
\end{array}
\end{equation*}
\end{proposition}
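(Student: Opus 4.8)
The plan is to verify each asserted relation in two stages: first that it holds modulo the center $Z(P(r,n))=\langle Z_{r,n}\rangle$, and then that the residual central correction vanishes.

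For the first stage, recall from \eqref{eq:splitaut} that $\tv(P(r,n))$ is the kernel of the projection $\Aut(P(r,n))\to\Aut(\bar P(r,n))$. By construction, the images of $\tilde\rho_0,\dots,\tilde\rho_{n-1},\tilde\epsilon,\tilde\Delta$ under this projection are conjugation by $\rho_0,\dots,\rho_{n-1},\epsilon,\Delta$ respectively, since the transvection factors $\Upsilon_2,\Psi,\Upsilon_n$ appearing in \eqref{eq:monoauts} die in the quotient and $c_\beta$ passes to conjugation by the image of $\beta$. By Proposition \ref{prop:CC} and Proposition \ref{prop:Pnormalizer} we have $\Aut(\bar P(r,n))\cong N_\Gamma(\bar P(r,n))=N_\Gamma(\bar B(r,n))\cong\bar B(r,n)\rtimes(\Z_2\times\Z_2)$, and in this group the elements $\rho_0,\dots,\rho_{n-1},\epsilon,\Delta$ satisfy precisely the listed relations (the monomial braid relations with $\zeta_n=(\rho_0\cdots\rho_{n-1})^n$ trivial, together with $\epsilon^2=\Delta^2=[\epsilon,\Delta]=1$ and the semidirect-product relations $(\epsilon\rho_k)^2=1$, $\Delta\rho_k\Delta=\rho_{n-k}$, $\Delta\rho_0\Delta=(\rho_{n-1}\cdots\rho_1\rho_0\rho_1\cdots\rho_{n-1})^{-1}$, cf.\ \eqref{eq:Bnormalizer}). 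Hence, for each relator $w$ in the statement, the automorphism $w$ of $P(r,n)$ lies in $\tv(P(r,n))$.

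For the second stage, note that every generator $\tilde\rho_k$, $\tilde\epsilon$, $\tilde\Delta$ fixes $Z_{r,n}$ (checked after \eqref{eq:monoauts}), hence so does $w$; by the analysis in the proof of Lemma \ref{lem:monotrans} a transvection fixing $Z_{r,n}$ lies in the free abelian subgroup $\Z^{N_r}<\tv(P(r,n))$. A nonzero element of $\Z^{N_r}$ acts nontrivially on $P(r,n)^{\mathrm{ab}}\cong\Z^{N_r+1}$, since it adds a nonzero multiple of the primitive class $[Z_{r,n}]=\sum_j[C_j]+\sum[A^{(q)}_{i,j}]$ to the class of some generator. So it suffices to check that $w$ acts trivially on $P(r,n)^{\mathrm{ab}}$ — equivalently, that $w$ fixes each of the standard generators \eqref{eq:PureMonomialGens} of $P(r,n)$ exactly, not merely up to a central power. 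This is a finite computation once one has the action of each $\tilde\rho_k$, $\tilde\epsilon$, $\tilde\Delta$ on the generators of $P(r,n)$; those formulas are the content of the following section. Much of the bookkeeping is automatic: conjugation defines a homomorphism $B(r,n)\to\Aut(P(r,n))$ under which the central element $\zeta_n$ acts trivially, so every relation involving only the $c_{\rho_k}$ — in particular every listed relation not involving $\tilde\rho_1$, $\tilde\epsilon$, or $\tilde\Delta$ — holds on the nose, and the remaining relations reduce to tracking how the extra transvections $\Upsilon_2$, $\Psi$, $\Upsilon_n$ are permuted and how central powers accumulate when carried across $\epsilon$ and $\Delta$.

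The main obstacle is exactly this last bookkeeping. Establishing the explicit actions of $\tilde\rho_k$, $\tilde\epsilon$, $\tilde\Delta$ on the generators \eqref{eq:PureMonomialGens} requires sustained calculation with the monomial braid relations \eqref{eq:MonomialBraidRels} and the presentation of $P(r,n)$ from \cite[Thm.~2.2.4]{Coh01}; and because $\tilde\epsilon$ and $\tilde\Delta$ are built from the center-inverting automorphisms $\epsilon$, $\Delta$, the $Z_{r,n}$-exponents interact in a way invisible to the mod-center picture and must be handled relation by relation — indeed the particular transvections inserted in \eqref{eq:monoauts} are chosen precisely so that these corrections cancel. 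I expect the conceptual content to be modest and the verification to be long but routine.
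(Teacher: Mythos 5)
Your proposal is correct and follows essentially the same route as the paper's own (sketch of) proof: the relations hold modulo the center because the induced automorphisms are the generators of $N_\Gamma(\bar{B}(r,n))$, the relations involving only honest conjugations hold automatically, and the exact (uncorrected-by-$Z_{r,n}$) relations are verified by computing on the generators \eqref{eq:PureMonomialGens} with the formulas of \S\ref{sec:monoaction}. Your added observation that each relator is a transvection fixing $Z_{r,n}$, hence lies in $\Z^{N_r}$ and vanishes once its central exponents are seen to be zero, is a clean repackaging of the paper's bookkeeping (e.g.\ its comparison of $\tilde\tau$ with $\tau$), not a different method, and like the paper you leave the generator-by-generator calculations as routine.
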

These are the relations of the normalizer of $\bar{P}(r,n)$ in $\Mod(\bS_{n+2})$.
\begin{proof}[Sketch of proof]
Since $\tilde\rho_k$ is conjugation by $\rho_k$ for $k \neq 1$, all of the relations which do not involve $\tilde\rho_1$, $\tilde\epsilon$, and $\tilde\Delta$ hold since they hold in the monomial braid group.  Additionally, note that the automorphisms $\rho_k$, $\epsilon$, and $\Delta$ of $\bar{P}(r,n)$ generate the normalizer 
$N_\Gamma(\bar{P}(r,n))=N_\Gamma(\bar{B}(r,n))$, where $\Gamma=\Mod(\bS_{n+2})$, so they satisfy the analogs of the relations stated in the Proposition.

These observations, together with the formulas for the automorphisms $\tilde\rho_k$, $\tilde\epsilon$, $\tilde\Delta$, $\Psi$, and $\Upsilon_i$ recorded in \S\ref{sec:monoaction} and \eqref{eq:monotransvections}, may be used to verify that all of the asserted relations hold.  For instance, let $\tau= \rho_0\rho_1\cdots\rho_{n-1}$ and $\tilde\tau=\tilde\rho_0\tilde\rho_1\cdots\tilde\rho_{n-1}$.  Note that $\tau^n=1$.  One can check that 
\[
\begin{array}{ll}
\tilde\tau(C_1)=\tau(C_1) \cdot Z_{r,n} = C_n^{W_1}\cdot Z_{r,n}, & 
\tilde\tau(C_2)=\tau(C_2) \cdot Z_{r,n}^{-1} = C_1^{W_2}\cdot Z_{r,n}^{-1},\\  
\tilde\tau(C_j)=\tau(C_j) = C_{j-1}^{W_j} \ (3\le j\le n), &
\tilde\tau(A_{i,j}^{(q)}) = \tau(A_{i,j}^{(q)}),
\end{array}
\] 
for certain words $W_j\in P(r,n)$.  This, together with the fact $\tau^n=1$, may be used to show that $\tilde\tau^n=(\tilde\rho_0 \tilde\rho_1 \cdots \tilde\rho_{n-1})^{n}=1$.

For the relation $\tilde\Delta\tilde\rho_0\tilde\Delta=(\tilde\rho_{n-1}\cdots\tilde\rho_1\tilde\rho_0\tilde\rho_1\cdots\tilde\rho_{n-1})^{-1}$, it is enough to show that
$\tilde\lambda=\tilde\rho_{n-1}\cdots\tilde\rho_1\tilde\rho_0\tilde\rho_1\cdots\tilde\rho_{n-1}\tilde\Delta\tilde\rho_0\tilde\Delta=1$.  The analogous automorphism $\lambda=\rho_{n-1}\cdots\rho_1\rho_0\rho_1\cdots\rho_{n-1}\Delta\rho_0\Delta$ is trivial (consider its action on the generators of $B(r,n)$).  Checking that $\tilde\lambda(x)=\lambda(x)$ for each generator $x$ of $P(r,n)$ reveals that $\tilde\lambda=1$ as well.

Verification of the remaining relations may be handled in a similar manner, and is left to the reader.
\end{proof}

\begin{theorem} \label{thm:monoautpres}
For $n\ge 3$, the automorphism group $\Aut(P(r,n))$ of the pure monomial braid group admits a presentation with generators
\[
\tilde\epsilon,\ \tilde\Delta,\ \tilde\rho_k,\ 0\le k \le n-1,\ \Psi,\ \Upsilon_l, \ 2\le l \le n,\ \Phi_{i,j,p}, \  1\le i < j \le n,\  1\le p\le r,
\]
and relations
\begin{align*}
&\begin{array}{lll} 
\tilde\rho_i \tilde\rho_{i+1} \tilde\rho_i= \tilde\rho_{i+1} \tilde\rho_i \tilde\rho_{i+1}\ \text{for $1\le i<n$}, 
&\tilde\rho_i\tilde\rho_j=\tilde\rho_j\tilde\rho_i\ \text{for}\ |i-j|\ge 2, 
&\tilde\epsilon^2=1,\\[2pt]
(\tilde\rho_0 \tilde\rho_1 \cdots \tilde\rho_{n-1})^{n}=1,
&(\tilde\rho_0\tilde\rho_1)^2=(\tilde\rho_1\tilde\rho_0)^2,
&\tilde\Delta^2=1,\\[2pt]
\tilde\Delta\tilde\rho_0\tilde\Delta=(\tilde\rho_{n-1}\cdots\tilde\rho_1\tilde\rho_0\tilde\rho_1\cdots\tilde\rho_{n-1})^{-1},
&(\tilde\epsilon\tilde\rho_k)^2=1\ \text{for}\ 0\le k< n,
&[\tilde\epsilon,\tilde\Delta]=1,\\[2pt]
\tilde\Delta\tilde\rho_k\tilde\Delta=\tilde\rho_{n-k}\ \text{for}\ 1\le k< n,
\end{array} \displaybreak[0]\\ 
&\ \,\Psi^2=1,\ \ [\Upsilon_l,\Phi_{i,j,p}]=1, \forall l,i,j,p,\ \ [\Phi_{i,j,p},\Phi_{k,l,q}]=1, \forall i,j,k,l,p,q,\ \ \tilde\epsilon\Psi\tilde\epsilon=\Psi,\\
&\  \,\Psi\Upsilon_l\Psi=\Upsilon_l^{-1}, \forall l,\ \ \Psi\Phi_{i,j,p}\Psi=\Phi_{i,j,p}^{-1}, \forall i,j,p,\ \ 
\ \,\tilde\Delta\Psi\tilde\Delta=\Psi,\quad 
\tilde\rho_k^{-1}\Psi\tilde\rho_k^{}=\Psi, \forall k,\displaybreak[0]\\
&\ \,\tilde\Delta\Upsilon_l\tilde\Delta=\begin{cases}\Upsilon_{n-l+1}^{-1}\Upsilon_n&l<n,\\ \Upsilon_n&l=n,\end{cases}\qquad 
\tilde\epsilon\Upsilon_l\tilde\epsilon=\Upsilon_l^{-1}, \forall l,\qquad 
\tilde\rho_0^{-1}\Upsilon_l\tilde\rho_0^{}=\Upsilon_l, \forall l,\displaybreak[0]\\
&\ \,\tilde\rho_1^{-1}\Upsilon_l^{}\tilde\rho_1^{}=\begin{cases}\Upsilon_2^{-1}&l=2,\\ \Upsilon_2^{-1}\Upsilon_l^{}&l\neq 2,\end{cases}\qquad 
\tilde\rho_k^{-1}\Upsilon_l^{}\tilde\rho_k^{}=\begin{cases}
\Upsilon_{k+1}&l=k,\\ \Upsilon_k&l=k+1,\\ \Upsilon_l&l\neq k,k+1,\end{cases}\ \text{for $k\ge 2$,} \displaybreak[0]\\
&\ \,\tilde\Delta\Phi_{i,j,p}\tilde\Delta=\begin{cases}
\Upsilon_{n-j+1}^{-1}\Upsilon_{n-i+1}^{-1}\Upsilon_n\Phi_{n-j+1,n-i+1,p}&j<n,\\
\Upsilon_{n-i+1}^{-1}\Upsilon_n\Phi_{1,n-i+1,p}&j=n, 
\end{cases} \displaybreak[0]\\
&\ \,\tilde\epsilon\Phi_{i,j,p}\tilde\epsilon=\begin{cases}
\Phi_{i,j,r-p}^{-1}& p< r,\\ \Phi_{i,j,r}^{-1}&p=r,\end{cases}
\qquad \tilde\rho_0^{-1}\Phi_{i,j,q}\tilde\rho_0^{}=\begin{cases}\Phi_{1,j,r}&i=1,q=1,\\
\Phi_{1,j,q-1}&i=1,q\neq 1,\\\Phi_{i,j,q}&\text{otherwise},\end{cases} \displaybreak[0]\\
&\ \,\tilde\rho_1^{-1}\Phi_{i,j,q}^{}\tilde\rho_1^{}=\begin{cases}\Upsilon_2^{-1}\Phi_{1,2,r-p}&i=1,j=2,p<r,\\
\Upsilon_2^{-1}\Phi_{2,j,p}^{}&i=1,j>2,\\ \Upsilon_2^{-1}\Phi_{1,j,p}^{}&i=2,\\
\Upsilon_2^{-1}\Phi_{i,j,p}^{}&\text{otherwise},\end{cases} \displaybreak[0]\\
&\ \,\tilde\rho_k^{-1}\Phi_{i,j,q}^{}\tilde\rho_k^{}=\begin{cases}
\Phi_{k,j,p}&k=i-1,\\ \Phi_{k+1,j,p}&k=i<j-1,\\ \Phi_{k,k+1,r-p}&k=i=j-1, p<r,\\ \Phi_{i,k,p}&k=j-1>i,\\ \Phi_{i,k+1,p}&k=j,\\ \Phi_{i,j,p}&\text{otherwise}
\end{cases}\qquad \text{for $k\ge 2$.}
\end{align*}
\end{theorem}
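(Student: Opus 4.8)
The plan is to mirror the proof of Theorem~\ref{thm:autpres}. By \eqref{eq:splitaut}, Proposition~\ref{prop:Pnormalizer}, and the identification $\Aut(\bar{P}(r,n)) \cong N_\Gamma(\bar{P}(r,n)) = N_\Gamma(\bar{B}(r,n))$ (Propositions~\ref{prop:CC} and~\ref{prop:Pnormalizer}, valid for $n \ge 3$), there is a split short exact sequence
\[
1 \to \tv(P(r,n)) \to \Aut(P(r,n)) \leftrightarrows N_\Gamma(\bar{B}(r,n)) \to 1 .
\]
I would then invoke the standard presentation of a split extension: if $1 \to A \to G \to B \to 1$ splits, $A = \langle S_A \mid R_A\rangle$, $B = \langle S_B \mid R_B\rangle$, and the chosen lifts of $S_B$ satisfy the lifted relations $R_B$, then $G = \langle\, S_A \sqcup S_B \mid R_A,\ R_B,\ s^{-1} t s = w_{s,t}\ (s\in S_B,\ t\in S_A)\,\rangle$, where $w_{s,t}$ records the conjugation action of the lift of $s$ on $A$. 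Thus it suffices to check that (i) the displayed elements generate $\Aut(P(r,n))$, (ii) the three blocks of relations match $R_A$, $R_B$, and the conjugation relations, and (iii) the conjugation relations hold as stated.

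For (i), the transvections $\Psi,\Upsilon_l,\Phi_{i,j,p}$ generate $\tv(P(r,n))$ by Lemma~\ref{lem:monotrans}, while by \eqref{eq:monoauts} the automorphisms $\tilde\rho_k,\tilde\epsilon,\tilde\Delta$ fix $Z_{r,n}$ and descend, modulo the center, to the generators $\rho_k,\epsilon,\Delta$ of $N_\Gamma(\bar{B}(r,n))$; by splitness the two families together generate. For (ii), the block involving only $\tilde\rho_0,\dots,\tilde\rho_{n-1},\tilde\epsilon,\tilde\Delta$ is exactly a presentation of $N_\Gamma(\bar{B}(r,n)) \cong \bar{B}(r,n)\rtimes(\Z_2\times\Z_2)$: the monomial braid relations \eqref{eq:MonomialBraidRels} together with $(\tilde\rho_0\cdots\tilde\rho_{n-1})^n=1$ present $\bar{B}(r,n)=B(r,n)/Z(B(r,n))$, and the remaining relations of that block encode the $\Z_2\times\Z_2$-action of \eqref{eq:Bnormalizer}; that these hold in $\Aut(P(r,n))$ is precisely Proposition~\ref{prop:Nrels}. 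The block involving only $\Psi,\Upsilon_l,\Phi_{i,j,p}$ presents $\tv(P(r,n))\cong\Z^{N_r}\rtimes\Z_2$ by Lemma~\ref{lem:monotrans} (recall all transvections commute). The remaining, mixed relations express, for each $g\in\{\tilde\rho_0,\dots,\tilde\rho_{n-1},\tilde\epsilon,\tilde\Delta\}$ and each transvection generator $h$, the conjugate $g^{-1}hg$ as a word in $\Psi,\Upsilon_l,\Phi_{i,j,p}$; verifying these is the heart of the argument.

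For (iii), I would use that a transvection is determined by a homomorphism $u\colon P(r,n)\to\Z$ via $x\mapsto x Z_{r,n}^{u(x)}$, and that conjugation by an automorphism fixing $Z_{r,n}$ replaces $u$ by its precomposition with that automorphism (up to overall sign if the center is inverted). Since $\Upsilon_2$ and $\Upsilon_n$ lie in the abelian part $\Z^{N_r}$, their occurrences in $\tilde\rho_1=c_{\rho_1}\circ\Upsilon_2$ and $\tilde\Delta=\Delta\circ\Psi\circ\Upsilon_n$ cancel when one conjugates an element of $\Z^{N_r}$; hence conjugating $\Upsilon_l$ or $\Phi_{i,j,p}$ by $\tilde\rho_k$ reduces to precomposing its defining homomorphism with the conjugation $c_{\rho_k}$ of $P(r,n)$, whose effect on the generators \eqref{eq:PureMonomialGens} is read off from the $\rho_k$-action formulas of \S\ref{sec:monoaction}. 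The $\Psi$-factors in $\tilde\epsilon$ and $\tilde\Delta$ conjugate $\Z^{N_r}$ by inversion, which accounts for the inversions appearing in the $\tilde\epsilon$- and $\tilde\Delta$-relations; conjugation of $\Psi$ itself is controlled by $\tilde\rho_k^{-1}\Psi\tilde\rho_k = \tilde\epsilon\Psi\tilde\epsilon = \tilde\Delta\Psi\tilde\Delta = \Psi$, which hold because $\Psi$ is, up to these same corrections, the transvection $Z_{r,n}\mapsto Z_{r,n}^{-1}$. Assembling these computations generator by generator, using \eqref{eq:monotransvections}, \eqref{eq:monoauts}, and \S\ref{sec:monoaction}, yields exactly the stated words, and the presentation lemma then finishes the proof.

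I expect the main obstacle to be the $\tilde\Delta$-relations, especially the formulas for $\tilde\Delta\Phi_{i,j,p}\tilde\Delta$ and $\tilde\Delta\Upsilon_l\tilde\Delta$. Since $\Delta$ reverses the order of $\rho_1,\dots,\rho_{n-1}$ and sends $\rho_0\mapsto X_n^{-1}$ and $X_i\mapsto X_{n-i+1}^{-1}$ (whence $D_i=X_i^r\mapsto D_{n-i+1}^{-1}$, as in the proofs of Lemma~\ref{lem:monocenter} and Proposition~\ref{prop:Pnormalizer}), tracking how it permutes and inverts the center-exponents $s_j$ and $t_{i,j,q}$ of a transvection, and then composing with the $\Psi$- and $\Upsilon_n$-corrections in $\tilde\Delta=\Delta\circ\Psi\circ\Upsilon_n$, is exactly what produces the additional factors $\Upsilon_{n-j+1}^{-1}\Upsilon_{n-i+1}^{-1}\Upsilon_n$ and $\Upsilon_{n-i+1}^{-1}\Upsilon_n$; pinning these down precisely is the delicate bookkeeping. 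The mixed $\tilde\rho_k$-relations, by contrast, follow more routinely from the permutation-type action of $c_{\rho_k}$ on the generators \eqref{eq:PureMonomialGens}.
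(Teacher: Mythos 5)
Your proposal is correct and follows essentially the same route as the paper: the split exact sequence from Propositions \ref{prop:CC} and \ref{prop:Pnormalizer}, generation and the transvection relations from Lemma \ref{lem:monotrans}, the normalizer relations from Proposition \ref{prop:Nrels}, and the mixed conjugation relations verified by direct computation with \eqref{eq:monotransvections}, \eqref{eq:monoauts}, and \S\ref{sec:monoaction}. The only difference is expository: you make explicit the standard presentation lemma for split extensions and the homomorphism-precomposition bookkeeping for conjugating transvections, both of which the paper uses implicitly and leaves to the reader.
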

\begin{proof}
By Proposition \ref{prop:CC} and Proposition \ref{prop:Pnormalizer}, there is a split, short exact sequence
\[
1\to \tv(P(r,n)) \to \Aut(P(r,n)) \leftrightarrows N_\Gamma(\bar{P}(r,n)) \to 1,
\]
where $\Gamma=\Mod(\bS_{n+2})$. From the proof of Lemma \ref{lem:monotrans}, the automorphisms $\Psi$, $\Upsilon_l$, and $\Phi_{i,j,p}$ generate the transvection subgroup $\tv(P(r,n))$, and satisfy the relations $\Psi^2=1, [\Upsilon_l,\Phi_{i,j,p}]=1, \Psi\Upsilon_l\Psi=\Upsilon_l^{-1}, \Psi\Phi_{i,j,p}^{-1}\Psi=\Phi_{i,j,p}^{-1}$. Since the automorphisms $\tilde\epsilon, \tilde\Delta, \tilde\rho_k$ induce the generators of $N_\Gamma(\bar{P}(r,n))$, these automorphisms, together with the aforementioned transvections, generate $\Aut(P(r,n))$.  By Proposition \ref{prop:Nrels}, the automorphisms $\tilde\epsilon, \tilde\Delta, \tilde\rho_k$ satisfy the relations of $N_\Gamma(\bar{P}(r,n))$.   So it suffices to show that the actions of these automorphisms on the transvections $\Psi$, $\Upsilon_l$, and $\Phi_{i,j,p}$ are as asserted. This may be accomplished by calculations with the descriptions of these automorphisms given in \eqref{eq:monoauts}, \S\ref{sec:monoaction}, and \eqref{eq:monotransvections}.
\end{proof}

\begin{remark} Theorem \ref{thm:monoautpres} exhibits the semidirect product structure $\Aut(P(r,n)) \cong \tv(P(r,n)) \rtimes N_\Gamma(\bar{P}(r,n))$, where $\Gamma=\Mod(\bS_{n+2})$ and $\tv(P(r,n)) \cong \Z^{N_r} \rtimes \Z_2$.  Recall that $N_\Gamma(\bar{P}(r,n))=N_\Gamma(\bar{B}(r,n))=\bar{B}(r,n) \rtimes (\Z_2\times\Z_2)$ (see \eqref{eq:Bnormalizer}), and note that the transvection $\Psi$ commutes with all generators of $N_\Gamma(\bar{P}(r,n))$.  It follows that 
$\Aut(P(r,n)) \cong (\Z^{N_r} \rtimes \bar{B}(r,n))\rtimes (\Z_2\times\Z_2\times\Z_2)$.
\end{remark}

In the case $n=2$, we have $P(r,2) \cong \Z \times F_{r+1}$, and the split extension  \eqref{eq:splitaut} yields $\Aut(P(r,2)) \cong \tv(P(r,2))\rtimes \Aut(F_{r+1})$,
where $\tv(P(r,2))\cong \Z^{r+1} \rtimes \Z_2$, generated by $\Psi$, $\Upsilon_2$, $\Phi_{1,2,p}$, $1\le p \le r$,  
(see Lemma \ref{lem:monotrans}), and 
\[
F_{r+1} = \bar{P}(r,2)=P(r,2)/Z(P(r,2))=P(r,2)/\langle Z_{r,2}\rangle = \langle A_{1,2}^{(1)}, \dots, A_{1,2}^{(r-1)}, C_2, A_{1,2}^{(r)} \rangle
\]
is the free group on $r+1\ge 3$ generators.  The group $\Aut(F_{r+1})$ admits a presentation, due to Nielsen, with generators $P$, $Q$, $\sigma$, and $U$, where
\[
\begin{array}{ll}
P\colon \begin{cases} A_{1,2}^{(1)} \mapsto A_{1,2}^{(2)},\\ A_{1,2}^{(2)} \mapsto A_{1,2}^{(1)},\\
A_{1,2}^{(q)} \mapsto A_{1,2}^{(q)}&q\neq 1,2,\\ C_2 \mapsto C_2, \end{cases}
&
Q\colon \begin{cases} A_{1,2}^{(q)} \mapsto A_{1,2}^{(q+1)}&q<r-1,\\ A_{1,2}^{(r-1)} \mapsto C_2,\\
A_{1,2}^{(r)} \mapsto A_{1,2}^{(1)},\\ C_2 \mapsto A_{1,2}^{(r)}, \end{cases} \\[3pt] 
\sigma\colon \begin{cases} A_{1,2}^{(1)} \mapsto (A_{1,2}^{(1)})^{-1},\\ A_{1,2}^{(q)} \mapsto A_{1,2}^{(q)}&q\neq 1,\\ C_2 \mapsto C_2, \end{cases}
&
U\colon \begin{cases} A_{1,2}^{(1)} \mapsto A_{1,2}^{(1)}A_{1,2}^{(2)},\\ A_{1,2}^{(q)} \mapsto A_{1,2}^{(q)}&q\neq 1,\\ C_2 \mapsto C_2, \end{cases}
\end{array}
\]
Refer to \cite[\S3.5, Cor.~N1]{MKS} for the relations satisfied by these generators of $\Aut(F_{r+1})$. 
Lifts of these automorphisms to automorphisms of $P(r,2)$ which fix the generator $Z_{r,2}=C_1A_{1,2}^{(1)}\cdots A_{1,2}^{(r-1)}C_{2}A_{1,2}^{(r)}$ of the center are given by setting
\[
\begin{array}{ll}
P(C_1)=C_1[A_{1,2}^{(1)}, A_{1,2}^{(2)}],\qquad\qquad&  
Q(C_1) = (A_{1,2}^{(1)})^{-1}C_1 A_{1,2}^{(1)},\\[3pt] 
\sigma(C_1)=C_1 (A_{1,2}^{(1)})^2,& 
U(C_1)=C_1 A_{1,2}^{(1)} (A_{1,2}^{(1)}A_{1,2}^{(2)})^{-1}.
\end{array}
\]
Calculations with these formulas yields the following result.

\begin{proposition} \label{prop:r2mono}
The automorphism group of the pure monomial braid group $P(r,2)$ is generated by 
$
P,\ Q,\ \sigma,\ U,\ \Psi,\ \Upsilon_2,\ \Phi_{1,2,p},\  1\le p\le r, 
$
and decomposes as a semidirect product $\Aut(P(r,2)) \cong \tv(P(r,n)) \rtimes \Aut(F_{r+1})$.  
The action of $\Aut(F_{r+1})$ on $\tv(P(r,n))$ is given by
\[
\begin{array}{ll}
P\colon\begin{cases} \Psi \mapsto \Psi, \\ \Upsilon_2 \mapsto \Upsilon_2, \\ 
\Phi_{1,2,1} \mapsto \Phi_{1,2,2},\\ \Phi_{1,2,2} \mapsto \Phi_{1,2,1},\\ \Phi_{1,2,p} \mapsto \Phi_{1,2,p}&p\neq 1,2,\end{cases}
&
Q\colon\begin{cases} \Psi \mapsto \Psi, \\ \Upsilon_2 \mapsto \Phi_{1,2,r}, \\ 
\Phi_{1,2,p} \mapsto \Phi_{1,2,p+1}&p\le r-2,\\ \Phi_{1,2,r-1} \mapsto \Upsilon_2,\\ \Phi_{1,2,r} \mapsto \Phi_{1,2,1},\end{cases}\\ 
\sigma\colon\begin{cases} \Psi \mapsto \Psi, \\ \Upsilon_2 \mapsto \Upsilon_2, \\ 
\Phi_{1,2,1} \mapsto \Phi_{1,2,1}^{-1},\\ \Phi_{1,2,p} \mapsto \Phi_{1,2,p}&p\neq 1,\end{cases} 
&
U\colon\begin{cases} \Psi \mapsto \Psi, \\ \Upsilon_2 \mapsto \Upsilon_2, \\ 
\Phi_{1,2,2} \mapsto \Phi_{1,2,1}^{-1}\Phi_{1,2,2},\\ \Phi_{1,2,p} \mapsto \Phi_{1,2,p}&p\neq 2.\end{cases}
\end{array}
\]
\end{proposition}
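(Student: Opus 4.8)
The plan is to follow the argument used above for $\Aut(P_3)$, exploiting the decomposition $P(r,2)\cong\Z\times F_{r+1}$ together with the split extension \eqref{eq:splitaut}. Since $\bar P(r,2)=F_{r+1}$ is free of rank $r+1\ge 3$, hence centerless, and $Z(P(r,2))=\langle Z_{r,2}\rangle$ is infinite cyclic, \eqref{eq:splitaut} yields the split exact sequence $1\to\tv(P(r,2))\to\Aut(P(r,2))\to\Aut(F_{r+1})\to 1$, where the canonical splitting sends $\alpha\in\Aut(F_{r+1})$ to the unique automorphism of $P(r,2)$ fixing $Z_{r,2}$ and restricting to $\alpha$ on the free complement $\langle A_{1,2}^{(1)},\dots,A_{1,2}^{(r-1)},C_2,A_{1,2}^{(r)}\rangle$. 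By Lemma \ref{lem:monotrans} with $n=2$ (so $N_r=r+1$), the kernel is $\tv(P(r,2))\cong\Z^{r+1}\rtimes\Z_2$, generated by the transvections $\Psi$, $\Upsilon_2$, $\Phi_{1,2,p}$, $1\le p\le r$, of \eqref{eq:monotransvections}, which pairwise commute except that $\Psi^2=1$, $\Psi\Upsilon_2\Psi=\Upsilon_2^{-1}$, and $\Psi\Phi_{1,2,p}\Psi=\Phi_{1,2,p}^{-1}$.

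Next I would pin down the complement. Nielsen's presentation realizes $\Aut(F_{r+1})$ on the four generators $P,Q,\sigma,U$ displayed above. To see that the stated lift formulas describe the images of these generators under the canonical splitting, solve $C_1=Z_{r,2}(A_{1,2}^{(1)}\cdots A_{1,2}^{(r-1)}C_2A_{1,2}^{(r)})^{-1}$ from the expression for $Z_{r,2}$, apply each Nielsen automorphism to the right side while fixing $Z_{r,2}$, and check that the value of $C_1$ so obtained agrees with the one recorded (recovering, e.g., $\sigma(C_1)=C_1(A_{1,2}^{(1)})^2$); that each formula defines a homomorphism and is invertible is automatic from the split exact sequence, or may be verified directly from the presentation of $P(r,2)$ in \cite[Thm.~2.2.4]{Coh01}. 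Hence $P,Q,\sigma,U$ satisfy exactly the Nielsen relations and generate a complement isomorphic to $\Aut(F_{r+1})$; together with $\Psi,\Upsilon_2,\Phi_{1,2,p}$ they generate $\Aut(P(r,2))$, which therefore decomposes as $\tv(P(r,2))\rtimes\Aut(F_{r+1})$.

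The remaining task is to compute the conjugation action of the complement on $\tv(P(r,2))$. For each $\beta\in\{P,Q,\sigma,U\}$ and each transvection $\theta\in\{\Psi,\Upsilon_2,\Phi_{1,2,p}\}$ one evaluates the conjugate of $\theta$ by $\beta$ on the generators $C_1,C_2,A_{1,2}^{(q)}$ of $P(r,2)$ and reads off the resulting element of $\tv(P(r,2))$ from \eqref{eq:monotransvections}. Three observations organize this: $\Psi$ is the transvection by $-2$ times the homomorphism recording the $Z_{r,2}$-coordinate, which every $\beta$ fixes, so $\Psi$ is central in the complement; $\Upsilon_2$ and $\Phi_{1,2,p}$ are each supported on a single free generator ($C_2$, respectively $A_{1,2}^{(p)}$), so conjugation by $\beta$ transforms them according to $\beta$'s action on that generator, giving the permutation-type actions of $P$ and $Q$ and the sign change under $\sigma$; and $U$, the only Nielsen generator of non-permutation, non-inversion type, produces the lone non-monomial relation $\Phi_{1,2,2}\mapsto\Phi_{1,2,1}^{-1}\Phi_{1,2,2}$. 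Assembling these yields the stated table. The one genuinely delicate point is the bookkeeping in this final step: one must track the powers of $Z_{r,2}$ introduced by the transvection formulas and confirm they cancel because each $\beta$ fixes $Z_{r,2}$; everything else is a formal consequence of \eqref{eq:splitaut}, Lemma \ref{lem:monotrans}, and Nielsen's presentation.
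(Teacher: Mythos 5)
Your proposal is correct and follows essentially the same route as the paper: the decomposition $P(r,2)\cong\Z\times F_{r+1}$ and the split extension \eqref{eq:splitaut} together with Lemma \ref{lem:monotrans} give $\Aut(P(r,2))\cong\tv(P(r,2))\rtimes\Aut(F_{r+1})$, the Nielsen generators $P,Q,\sigma,U$ are lifted to automorphisms of $P(r,2)$ fixing $Z_{r,2}$, and the table of actions on $\Psi,\Upsilon_2,\Phi_{1,2,p}$ is obtained by direct calculation with \eqref{eq:monotransvections}. Your added remarks on deriving the lift of $C_1$ from $Z_{r,2}=C_1A_{1,2}^{(1)}\cdots A_{1,2}^{(r-1)}C_2A_{1,2}^{(r)}$ and on organizing the conjugation computation are consistent with the paper's (largely omitted) verification.
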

\begin{remark} Note that the generator $\Psi$ of $\tv(P(r,2))< \Aut(P(r,2))$ commutes with the generators $P,Q,\sigma,U$ of $\Aut(P(r,2))$ which project to the generators of $\Aut(F_{r+1})$.  It follows that 
$\Aut(P(r,2)) \cong \Z^{r+1} \rtimes (\Z_2 \times \Aut(F_{r+1}))$.
\end{remark}

Since $P(r,1)=\Z$ is infinite cyclic, $\Aut(P(r,1))=\Z_2$.

\section{The action of $\Aut(P(r,n))$ on the generators of $P(r,n)$} \label{sec:monoaction}
We record the action of the elements $\tilde\epsilon, \tilde\Delta, \tilde\rho_k, 0\le k \le n-1$, of $\Aut(P(r,n))$ 
on the generators $C_j$, $1\le j\le n$, and $A_{i,j}^{(q)}$, $1\le i < j \le n$, $1\le q \le r$ of the pure monomial braid group $P(r,n)$.  
See \eqref{eq:monotransvections} for the action of the generators of the transvection subgroup $\tv(P(r,n))$.
Recall the elements $A_{i,j}^{[q]},  V_{i,j}^{(q)}, 
D_k \in P(r,n)$ from \eqref{eq:particularmonobraids}, and that $y^x=x^{-1}yx$. 
For $1\le i<j \le n$ and $1\le q \le r$, let $U_i^{(q)}=A_{i,i+1}^{(q)}A_{i,i+2}^{(q)} \cdots A_{i,n}^{(q)}$. 
The actions of  $\tilde\epsilon$,  $\tilde\Delta$, and $\tilde\rho_k$ are given by:
\begin{align*} \label{eq:monoauto}
\tilde\epsilon&\colon 
\begin{cases}
C_{j}\mapsto C_1^{-1} Z^2_{r,n} &\text{if $j=1$,}\\
C_{j}\mapsto (V_{1,j}^{(r)})^{-1}{C_j^{-1}}V_{1,j}^{(r)} &\text{if $j\neq 1$,}\\
A_{i,j}^{(q)}\mapsto (V_{i+1,j}^{(r)})^{-1}{(A_{i,j}^{(r)})^{-1}} V_{i+1,j}^{(r)}&\text{if $q=r$,}\\
A_{i,j}^{(q)}\mapsto (V_{i+1,j}^{(r)})^{-1}D_i^{}{(A_{i,j}^{(r-q)})^{-1}}D_i^{-1}V_{i+1,j}^{(r)}&\text{if $q\neq r$,}
\end{cases}\displaybreak[2]\\
\tilde\Delta&\colon 
\begin{cases}
C_{j}\mapsto D_n^{-1} Z_{r,n} &\text{if $j=1$,}\\
C_{j}\mapsto [U_{n-j+1}^{(r)} D_{n-j+1} U_{n-j+1}^{(1)} \cdots U_{n-j+1}^{(r-1)}]^{-1}&\text{if $j\neq 1,n$,}\\
C_{j}\mapsto [U_{1}^{(r)} D_{1} U_{1}^{(1)} \cdots U_{1}^{(r-1)}]^{-1} Z_{r,n} &\text{if $j=n$,}\\
A_{i,j}^{(q)}\mapsto (A_{n-j+1,n-i+1}^{(r)})^{V_{n-j+1,n-i+1}^{(r)}}&\text{if $q=r$,}\\
A_{i,j}^{(q)}\mapsto (A_{n-j+1,n-i+1}^{(q)})^{V_{n-j+1,n-i+1}^{(r)}D_{n-j+1}^{-1}D_{n-i+1}^{-1}}&\text{if $q\neq r$,}
\end{cases}\displaybreak[2]\\
\tilde\rho_0&\colon 
\begin{cases}
C_{j}\mapsto C_1&\text{if $j=1$,}\\
C_{j}\mapsto {C_j}^{(A_{1,j}^{(r-1)})^{-1}}&\text{if $j\neq 1$,}\\
A_{i,j}^{(q)}\mapsto A_{i,j}^{(q-1)}&\text{if $i=1$, $q\neq 1$,}\\
A_{i,j}^{(q)}\mapsto (A_{i,j}^{(r)})^{C_1}&\text{if $i=1$, $q=1$,}\\
A_{i,j}^{(q)}\mapsto A_{i,j}^{(q)}&\text{if $i\ge 2$,}
\end{cases}\displaybreak[2]\\
\tilde\rho_1&\colon 
\begin{cases}
C_{j}\mapsto C_2^{A_{1,2}^{(r)}} Z_{r,n}&\text{if $j=1$,}\\
C_{j}\mapsto C_1 Z_{r,n}^{-1}&\text{if $j=2$,}\\
C_{j}\mapsto C_j &\text{if $j\ge 3$,}\\
A_{i,j}^{(q)}\mapsto (A_{1,2}^{(r-q)})^{(C_1A_{1,2}^{(1)}\cdots A_{1,2}^{(r-q-1)})^{-1}}&\text{if $i=1$, $j=2$ $q<r$,}\\
A_{i,j}^{(q)}\mapsto A_{2,j}^{(q)}&\text{if $i=1$, $j\ge 3$ $q=r$,}\\
A_{i,j}^{(q)}\mapsto (A_{2,j}^{(q)})^{(C_1A_{1,2}^{(1)}\cdots A_{1,2}^{(r-q-1)}C_1^{-1})^{-1}}&\text{if $i=1$, $j\ge 3$ $q<r$,}\\
A_{i,j}^{(q)}\mapsto (A_{1,j}^{(q)})^{A_{1,2}^{(q+1)}\cdots A_{1,2}^{(r)}}&\text{if $i=2$, $q<r$,}\\
A_{i,j}^{(q)}\mapsto A_{i,j}^{(q)}&\text{otherwise,}
\end{cases}
\displaybreak[2]\\
\tilde\rho_k&\colon 
\begin{cases}
C_{j}\mapsto C_{j-1} &\text{if $k=j-1$,}\\
C_{j}\mapsto C_{j+1}^{A_{j,j+1}^{(r)}}&\text{if $k=j$,}\\
C_{j}\mapsto C_j &\text{if $k\neq j-1,j$,}\\
A_{i,j}^{(q)}\mapsto (A_{i-1,j}^{(q)})^{A_{i-1,i}^{[q]}A_{i-1,i}^{(r)}}&\text{if}\ k=i-1,q<r,\\
A_{i,j}^{(q)}\mapsto A_{i-1,j}^{(r)}&\text{if}\ k=i-1,q=r,\\
A_{i,j}^{(q)}\mapsto (A_{i+1,j}^{(q)})^{(D_iA_{i,i+1}^{(1)}\cdots A_{i,i+1}^{(r-q-1)}D_i^{-1})^{-1}}&\text{if}\ k=i<j-1,q<r,\\
A_{i,j}^{(q)}\mapsto A_{i+1,j}^{(r)}&\text{if}\ k=i<j-1,q=r,\\
A_{i,j}^{(q)}\mapsto (A_{i,i+1}^{(r-q)})^{(D_iA_{i,i+1}^{(1)}\cdots A_{i,i+1}^{(r-q-1)})^{-1}}&\text{if}\ k=i=j-1,q<r,\\
A_{i,j}^{(q)}\mapsto A_{i,j-1}^{(q)}&\text{if}\ k=j-1>i,\\
A_{i,j}^{(q)}\mapsto (A_{i,j+1}^{(q)})^{A_{j,j+1}^{(r)}}&\text{if}\ k=j,\\
A_{i,j}^{(q)}\mapsto A_{i,j}^{(q)}&\text{otherwise,}
\end{cases}
\end{align*}
for $2\le k \le n-1$.

\newpage

\newcommand{\arxiv}[1]{{\texttt{\href{http://arxiv.org/abs/#1}{{arXiv:#1}}}}}

\newcommand{\MRh}[1]{\href{http://www.ams.org/mathscinet-getitem?mr=#1}{MR#1}}

\bibliographystyle{amsalpha}

\begin{thebibliography}{MKS66}

\bibitem[BM07]{BM07} R. Bell, D. Margalit, \emph{Injections of Artin groups}, Comment. Math. Helv. \textbf{82} (2007), 725--751.
\MRh{2341838}

\bibitem[Bir75]{Bir75}
J.~Birman, \emph{Braids, Links and Mapping Class Groups}, Ann. of
Math. Studies, Vol.  82, Princeton Univ. Press, Princeton, NJ,
1975.
\MRh{375281}

\bibitem[BS72]{BS72}
E. Brieskorn, K. Saito, \emph{Artin-Gruppen und Coxeter-Gruppen}, 
Invent. Math. \textbf{17} (1972), 245--271. 
\MRh{0323910}

\bibitem[BMR98]{BMR} M.~Brou\'{e}, G.~Malle, R.~Rouquier, 
{\em Complex reflection groups, braid groups, Hecke algebras}, 
J. reine angew. Math. \textbf{500} (1998), 127--190. 
\MRh{1637497}

\bibitem[CC05]{CC05} R. Charney, J. Crisp, \emph{Automorphism groups of some affine and finite type Artin groups}, Math. Res. Lett. \textbf{12} (2005), 321--333.
\MRh{2150887}

\bibitem[Coh01]{Coh01}
D. Cohen, \emph{Monodromy of fiber-type arrangements and orbit
configuration spaces}, Forum Math. {\bf 13}, 2001, 505--530. 
\MRh{1830245}

\bibitem[CFR11]{CFR11} D. Cohen, M. Falk, R. Randell, \emph{Pure braid groups are not residually free}, preprint 2011.\\
\arxiv{1106.4602}

\bibitem[Del72]{Del}
P. Deligne, \emph{Les immeubles des groupes de tresses g\'en\'eralis\'es}, 
Invent. Math. \textbf{17} (1972), 273--302. 
\MRh{0422673}

\bibitem[DG81]{DG81} J. Dyer, E. Grossman, \emph{The automorphism groups of the braid groups}, Amer. J. Math.  \textbf{103} (1981), 1151--1169. 
\MRh{0636956}

\bibitem[Iv03]{Iv03} 
N. Ivanov, \emph{Mapping class groups}, Handbook of geometric topology, 523--633, North-Holland, Amsterdam, 2002.
\MRh{1886678}

\bibitem[Kor99]{Kor99} M. Korkmaz, 
\emph{Automorphisms of complexes of curves on punctured spheres and on punctured tori},  
Topology Appl. \textbf{95} (1999), 85--111. 
\MRh{1696431}

\bibitem[LM06]{LM06} C. Leininger, D. Margalit, \emph{Abstract commensurators of braid groups}, J. Algebra \textbf{299} (2006), 447--455. 
\MRh{2228321}

\bibitem[MKS66]{MKS}
W.~Magnus, A. ~Karras, D. ~Solitar, \emph{Combinatorial group
theory: Representations of groups in terms of generators and
relations}, Pure and Applied Mathematics, Vol. 13, Interscience
Publishers, New York, 1966.
\MRh{0207802}

\bibitem[OT92]{OT}  P.~Orlik, H.~Terao, {\em Arrangements of
Hyperplanes}, Grundlehren Math. Wiss., Vol.~300,
Springer-Verlag, Berlin, 1992. 
\MRh{1217488}

\end{thebibliography}

\end{document}